\newcommand{\rank}{\text{rank}}
\newcommand{\BP}{{\sf BP}}
\newcommand{\MBP}{{\sf MBP}}
\DeclareSymbolFont{extraup}{U}{zavm}{m}{n}
\DeclareMathSymbol{\varheart}{\mathalpha}{extraup}{86}
\DeclareMathSymbol{\vardiamond}{\mathalpha}{extraup}{87}
\begin{document}

\title{Rank-$k$ random graphs and finite type branching processes }

\author[S. \ Chakraborty]{Suman Chakraborty}
\address{Department of Mathematics and Computer Science,  Eindhoven University of Technology, Eindhoven, Netherlands}
\email{s.chakraborty1@tue.nl}

\author[K.\ Raaijmakers]{Kjell Raaijmakers}
\address{Department of Mathematics and Computer Science,  Eindhoven University of Technology, Eindhoven, Netherlands}
\email{k.raaijmakers@student.tue.nl}

\author[R.\ van der Hofstad]{Remco van der Hofstad}
\address{Department of Mathematics and Computer Science,  Eindhoven University of Technology, Eindhoven, Netherlands}
\email{rhofstad@win.tue.nl}

\date{\today}
\subjclass[2020]{Primary: 05C80, 60J80, 82B43. }
\keywords{Random graphs, branching processes, percolation}

\maketitle

\begin{abstract}
In this note, we investigate fundamental relations between exploration processes in random graphs, and branching processes. We formulate a class of models that we call {\em rank-$k$ random graphs}, and that are special in that their neighborhood explorations can be obtained by a {\em thinning} of multi-type branching processes. We show that any rank-2 random graph can be described in terms of thinning of a 2-type branching process, while for higher rank, it is not clear how many types are needed.
\end{abstract}

\section{Introduction}
Let $A=(a_{ij})_{i,j=1}^n$ be an $n \times n$ symmetric matrix with non-negative entries, and $\ell_n>0$ a parameter. consider the random graph $G_n(A)$ with vertex set $[n]:=\{1,2,\ldots,n\}$, in which the edge $(i,j)$ is present with probability $1-\exp{\left(-\ell_n a_{ij}\right)}$ independent of all other edges for $1\leq i\leq j\leq n$. (We allow self-loops in $G_n(A)$.) Let us now roughly state two well-known facts (the formal statements are given later): \textbf{Fact 1:} The distribution of the neighborhoods of any vertex in the random graph $G_n(A)$ can be coupled with a thinned \emph{$n$-type} branching process associated with the matrix $A$. \textbf{Fact 2:} If $\rank(A)=1$, then the distribution of the neighborhoods of any vertex in the random graph can be coupled to a thinned one-type branching process associated with $A$. \par

This paper started with the following question motivated by the two facts mentioned above: If $\rank(A)=2$, can we then couple the distribution of the neighborhoods of the vertices in the random graph $G_n(A)$ with some thinned multi-type branching process of at most $2$ types? In this article, we answer this question in the affirmative, and encouraged by this, we investigate two related questions: (a) if $\rank(A)=k$ for some $k\geq 3$, can we couple the neighborhood distribution of any vertex in $G_n(A)$ with some branching process where the number of types depends on $k$, but not on $n$? (b) what conditions on $A$ (other than the rank of $A$) will ensure that $G_n(A)$ can be coupled with some multi-type branching process having a small number of types (independent of $n$)?  \par

If $A$ is an adjacency matrix of some simple graph, and it has rank $k$, then we show that one can couple the distribution of the neighborhoods of the vertices in the random graph $G_n(A)$ with some thinned multi-type branching process of at most $O(2^{k/2})$ types. Let us point out that the relationship between the rank of adjacency matrices and other graph properties such as chromatic number has been extensively studied (see van Nuffelen \cite{van1976bound}, Alon-Seymour \cite{alon1989counterexample}, Nisan-Wigderson\cite{nisan1995rank} etc.). Our work sheds light on percolation of finite graphs whose adjacency matrix has rank $k$ (see Remark \ref{rem-bond-percolation} below), and the type space of its branching process approximation.

Next, to put our contributions in context, we briefly recap some of the relevant random graph models from the literature and provide formal statements of the facts that we mentioned earlier. 

\subsection{Inhomogeneous random graphs and branching processes} The systematic study of random graphs began with the seminal article by Erd\H{o}s and R\'enyi~\cite{erdHos1960evolution}, who studied uniformly chosen graphs $G(n,M)$ with $n$ labelled vertices and $M$ edges. Slightly before that, Gilbert~\cite{gilbert1959random} introduced a closely related model $G_n(p)$, in which each edge is present independently with probability $p$. Since then, these models of random graphs have been studied extensively, and have become a source of wonderful mathematical problems.

A feature of $G_n(p)$ and $G(n, M)$ is that they are homogeneous, that is, in their definitions, all the vertices play an equivalent role. On the other hand, most real-world networks contain some degree of inhomogeneity. Several studies were devoted to introducing and analyzing models of inhomogeneous random graphs. A natural generalization of the $G_n(p)$ model is $G_n(\{p_{ij}\}_{i,j=1}^n)$, which is a random graphs on the vertex set $[n]:=\{1,2,\ldots,n\}$, in which the edges are chosen independently. Here, for $1 \leq i\leq j \leq n$ the probability of $ij$ being an edge is equal to $p_{ij}$. As pointed out in Bollob{\'a}s, Janson, and Riordan~\cite{bollobas2007phase}, it seems difficult to obtain results for this class of random graphs without imposing some conditions on the edge probabilities $p_{ij}$'s. Having that said, a remarkable result on connectivity of such graphs was obtained by Alon~\cite{alon1995note} (this model is also mentioned in Bollob{\'a}s~\cite[Chapter 2]{bollobas1998random}). It is easy to see that the $G_n(A)$ model is equivalent to the model $G_n(\{p_{ij}\}_{i,j=1}^n)$ if we allow the $a_{ij}$ parameters to take the value $\infty$. 

Bollob{\'a}s, Janson, and Riordan~\cite{bollobas2007phase} introduced a highly general class of inhomogeneous random graphs. In these random graphs the average degree is $\Theta(1)$, and one of the important examples considered by them is $p_{ij} = \kappa(i/n,j/n)/n$, where $\kappa$ is an appropriate function on $[0,1]^2$. A common strategy in the analysis of a random graph where the average degree is $\Theta(1)$ is to relate the neighborhood distribution of the random graph to an appropriate branching process. Then the branching process is analysed to extract relevant information about the random graph. To see this connection, first let us recall that one can always associate a branching process to a non-negative symmetric matrix $A$.

\subsection*{Non-negative matrices and an associated branching process} For a given symmetric matrix $A=(a_{ij})\in \mathbb{R}_{+}^{n \times n}$ and $\ell_n \in \mathbb{R}_{+}$, it is possible to naturally associate an $n$-type branching process $\BP(i_0)$ as follows:
\begin{itemize}
    \item[$\rhd$] The root of the branching process is of type $i_0$.
    \item[$\rhd$] If a particle in the branching process has the type $i$, then it gives birth to $\text{Poi}\left(\ell_n a_{ij}\right)$ many offspring of type $j$, independently for $j\in [n]$.
\end{itemize}
If the root $i_0$ is chosen uniformly at random from $[n]$, then we denote the branching process by $\BP(i_0)$.

\paragraph{\bf Multi-type branching processes, marks and graphs.} Below, we will call types in $[n]$ {\em marks}, and think of them corresponding to vertices in the random graph $G_n(A)$. This allows us to distinguish the marks from the {\em types} in the multi-type branching process. To relate the branching process to the random graph, we always need to keep the mark information, leading us to study {\em marked multi-type branching processes.}

In terms of these marked branching processes, the neighborhoods in $G_n(A)$ can then be obtained by a {\em thinning} of this branching process, in the sense that vertices that have already appeared are thinned, together with  all their descendants. This relates the neighborhoods in $G_n(A)$ to an $n$-dependent branching process. While this is very interesting, the fact that the number of types in the multi-type branching process {\em depends on $n$} makes it difficult to use this relation. In this paper, we investigate when the multi-type branching process with $n$ types can be replaced by a marked multi-type branching process with a number of types that does not depend on $n$.

\paragraph{\bf Marked multi-type branching process notation.} Let us fix some notation. We consistently write $\BP(i_0)$ and $\BP$ for the branching process with $n$ types, where the marks and types are the same, and $i_0$ is the mark (or type) of the root. When relating these processes to multi-type processes with fewer types, we write $\BP_k(i_0)$ and $\BP_k$ to denote that there are $k$ types in our branching process. We further write $\MBP_k$ for the {\em marked} $k$-type branching process, where we think of the marks as corresponding to the vertices of the random graphs under investigation. We then use $\widetilde\MBP_k(i_0)$ to denote the {\em thinned} version of $\MBP_k(i_0)$, where vertices having repeated marks are removed with all their descendants. Thinning is the content of the next section.

\subsection{Thinned processes}\label{subsec:430pm27apr22} The thinned process $\widetilde{\BP}(i_0)$ is constructed from $\BP(i_0)$ in the following way. Starting from $i_0$, $\BP(i_0)$ is explored in a breadth-first manner, that is, all the members of a generation are explored before moving on to the next generation, and, whenever a particle is found whose type appeared before in the search, the particle is deleted along with all its descendants. The resulting process is then denoted by $\widetilde{\BP}(i_0)$.  

Now, let us consider the random graph $G_n(A)$. Then, for a vertex $i_0$, its neighborhood shells $\mathcal{N}_k(i_0)$ are defined as follows:
\begin{itemize}
    \item[$\rhd$] $\mathcal{N}_0(i_0) = i_0$.
    \item[$\rhd$] $\mathcal{N}_{k+1}(i_0)=\left\{j\in\left(\bigcup_{l=0}^k\mathcal{N}_l(i_0)\right)^c: j\leftrightarrow \mathcal{N}_k(i_0)\right\}$.
\end{itemize}
Here, for two subsets of vertices $A$ and $B$, $A\leftrightarrow B$ means that there is an edge between  $A$ and $B$. Note that $\BP(i_0)$ is a branching process with $n$-types. Then the following proposition is the formal statement of our \textbf{Fact 1}:
\begin{prop}[Graph exploration is thinned branching process]\label{prop:431pm15mar22}
Let $\widetilde{\mathcal{T}}_k$ be the set of the marks of the particles that appeared in the $k$-th generation of the multi-type branching process $\widetilde{\BP}(i_0)$ with $n$ types. Then, the sequence of sets $\{\widetilde{\mathcal{T}}_k\}_{k\geq 0}$ has the same distribution as $\{\mathcal{N}_k(i_0)\}_{k\geq 0}$.
\end{prop}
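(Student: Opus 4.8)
The plan is to prove that the two sequences of random sets have the same law by exhibiting them as Markov chains on the subsets of $[n]$ with identical one-step transition kernels, and then concluding by induction on the generation $k$. Throughout I write $S_k = \bigcup_{l=0}^k \mathcal{N}_l(i_0)$ for the vertices discovered in the graph up to shell $k$, and $\widetilde{S}_k = \bigcup_{l=0}^k \widetilde{\mathcal{T}}_l$ for the marks discovered in the thinned branching process up to generation $k$. The base case $\mathcal{N}_0(i_0) = \{i_0\} = \widetilde{\mathcal{T}}_0$ is immediate, so the entire content lies in matching the conditional law of the $(k+1)$-st set given the history.

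For the graph side, I would explore $G_n(A)$ in breadth-first order and argue that, to determine $\mathcal{N}_{k+1}(i_0)$, one only needs to inspect the edges between $\mathcal{N}_k(i_0)$ and $S_k^c$, and that these edges are disjoint from all edges inspected in earlier rounds: an edge probed when forming $\mathcal{N}_k(i_0)$ has an endpoint in $\mathcal{N}_{k-1}(i_0)$, whereas one probed when forming $\mathcal{N}_{k+1}(i_0)$ has both endpoints outside $\mathcal{N}_{k-1}(i_0)$. Hence these edges are fresh and mutually independent, and by independence of edges, for each $j \in S_k^c$,
\[
\mathbb{P}\bigl(j \in \mathcal{N}_{k+1}(i_0) \mid \mathcal{N}_0, \ldots, \mathcal{N}_k\bigr) = 1 - \prod_{i \in \mathcal{N}_k(i_0)} \exp(-\ell_n a_{ij}) = 1 - \exp\Bigl(-\ell_n \sum_{i \in \mathcal{N}_k(i_0)} a_{ij}\Bigr),
\]
these events being independent across $j \in S_k^c$ since distinct $j$ involve disjoint edge sets.

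For the branching process side, I would use that the thinning leaves exactly one surviving particle per mark in $\widetilde{\mathcal{T}}_k$. Each such particle of mark $i$ produces $\text{Poi}(\ell_n a_{ij})$ offspring of type $j$ independently over $j$, so the aggregate number of type-$j$ children of the entire $k$-th generation is, by additivity of independent Poisson variables, distributed as $\text{Poi}\bigl(\ell_n \sum_{i \in \widetilde{\mathcal{T}}_k} a_{ij}\bigr)$, independently across $j$. A mark $j$ enters $\widetilde{\mathcal{T}}_{k+1}$ precisely when $j \notin \widetilde{S}_k$ and at least one type-$j$ child is born; using $\mathbb{P}(\text{Poi}(\lambda) \geq 1) = 1 - e^{-\lambda}$, this occurs, conditionally on the history, with probability $1 - \exp\bigl(-\ell_n \sum_{i \in \widetilde{\mathcal{T}}_k} a_{ij}\bigr)$, independently over $j \notin \widetilde{S}_k$.

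Since both conditional laws depend on the history only through the current set ($\mathcal{N}_k(i_0)$ resp.\ $\widetilde{\mathcal{T}}_k$) and the discovered set ($S_k$ resp.\ $\widetilde{S}_k$), and since these transition kernels coincide term by term, the induction closes and the two processes share all finite-dimensional distributions, hence the same distribution. The step I expect to require the most care is the freshness argument on the graph side, namely verifying that the breadth-first exploration never re-probes an edge, so that the conditional edge probabilities are genuinely the unconditioned ones; the Poisson collapse on the branching side is the clean mirror of this, and the whole proof hinges on the elementary identity making $1 - e^{-\ell_n a_{ij}}$ simultaneously the edge probability and the probability that a $\text{Poi}(\ell_n a_{ij})$ count is positive.
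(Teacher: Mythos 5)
Your proof is correct and follows essentially the same route as the paper: induction over generations, with the key identity that $1-\mathrm{e}^{-\ell_n a_{ij}}$ is simultaneously the edge probability and the probability that a $\mathrm{Poi}(\ell_n a_{ij})$ offspring count is positive, and with the thinning accounting for already-discovered marks. Your write-up is in fact somewhat more careful than the paper's, since you aggregate the Poisson rates over the whole generation and explicitly justify the independence and freshness of the probed edges, but the underlying argument is the same.
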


With Proposition \ref{prop:431pm15mar22} in hand, we can focus our attention on marked branching processes, where the thinning will allow us to return to the random graph using Proposition \ref{prop:431pm15mar22}. Thinning, in turn, requires marks, which thus naturally lead us to study marked branching processes. The aim is to reduce the graph exploration to a thinning of a marked $k$-type branching process with $k$ as small as possible. To understand this branching process, in turn, we may ignore the marks and focus only on the types. That reduces the complexity of the branching process considerably.

The proof of Proposition \ref{prop:431pm15mar22} is a modification of an argument in \cite{MR2213964}, and we postpone it to Section \ref{sec:1126am06may2022}. Interestingly, when $\rank{(A)}=1$, that is, when $A=\mathbf{a}\mathbf{a}'$ for some $\mathbf{a} \in \mathbb{R}_{+}^{n\times 1}$, the branching process $\BP(i_0)$ has the same distribution as a one-type marked branching process, denoted by $\BP_1(i_0)$, where 
\begin{itemize}
    \item[$\rhd$] the root of the branching process $\BP_1(i_0)$ receives the mark $i_0$.
    \item[$\rhd$] a particle with mark $i$ gives birth to $\text{Poi}\left(a_i\ell_n\sum_{i=1}^na_i\right)$ many offspring.
    \item[$\rhd$] each offspring of a particle receives mark $j$ with probability $\tfrac{a_j}{\sum_{i=1}^na_i}$. 
\end{itemize}
We can construct an $n$-type branching process from $\BP_1(i_0)$ by calling a particle with mark $i$ in $\BP_1(i_0)$ to be of type $i$. In line with our general notation, let us denote this by $\MBP_1(i_0)$, which we next show has the same distribution as $\BP(i_0)$: 


\begin{thm}[Marked 1-type BP is $n$-type BP]\label{prop:419pm27apr22}
$\MBP_1(i_0)$ has the same distribution as $\BP(i_0)$.
\end{thm}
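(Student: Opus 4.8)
The plan is to verify that $\MBP_1(i_0)$ and $\BP(i_0)$ have identical offspring distributions at each particle, and then to promote this to equality of the full process laws by an induction over generations. Both processes start from a single root carrying type (mark) $i_0$, so the base case is immediate, and the content of the theorem lies entirely in matching the reproduction mechanism.

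Fix a particle of type $i$. In $\BP(i_0)$, using $\rank(A)=1$, i.e.\ $a_{ij}=a_ia_j$, this particle independently produces $\text{Poi}(\ell_n a_i a_j)$ children of type $j$ for each $j\in[n]$. In $\MBP_1(i_0)$, the same particle first produces a total number $N\sim\text{Poi}\big(a_i\ell_n\sum_{k=1}^n a_k\big)$ of children, and then each child is assigned mark $j$ independently with probability $a_j/\sum_{k=1}^n a_k$. The key step is the Poisson splitting (colouring) property: if $N\sim\text{Poi}(\lambda)$ and each of the $N$ items is independently coloured $j$ with probability $p_j$, then the colour counts $(N_j)_j$ are independent with $N_j\sim\text{Poi}(\lambda p_j)$. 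Applying this with $\lambda=a_i\ell_n\sum_{k} a_k$ and $p_j=a_j/\sum_{k} a_k$ yields independent $N_j\sim\text{Poi}(a_i\ell_n a_j)=\text{Poi}(\ell_n a_{ij})$, which is exactly the offspring law of $\BP(i_0)$.

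Since the per-particle offspring laws agree, and in both models distinct particles reproduce independently while the reproduction of a type-$i$ particle depends only on its type, the two processes are Markov branching processes with the same one-step kernel and the same initial condition. I would therefore conclude by a straightforward induction on the generation number $k$: assuming the first $k$ generations (as marked, labelled trees) have the same law, the $(k+1)$-st generation is obtained by applying the same offspring kernel independently to each generation-$k$ particle, so the joint law through generation $k+1$ coincides as well; taking $k\to\infty$ gives equality of the full process laws.

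I expect the only delicate point to be bookkeeping rather than mathematics: one must phrase the equality at the level of marked labelled trees, so that marks (not merely type counts) are matched, and check that the independence across children built into the i.i.d.\ mark assignment in $\MBP_1(i_0)$ lines up with the independence across $j$ in $\BP(i_0)$. Both of these are precisely what the Poisson splitting property delivers, so no genuine obstacle arises beyond making the correspondence between ``mark $j$'' and ``type $j$'' explicit.
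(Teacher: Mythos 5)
Your proof is correct and follows essentially the same route as the paper: the paper establishes this identity (in the more general $k$-type dot-product setting of Theorem \ref{prop-n-vs-k-type-BP}, of which this is the $k=1$ case) by exactly the same induction over generations, with the key per-particle step being the Poisson splitting computation $\text{Poi}\bigl(a_i\ell_n\sum_k a_k\cdot\tfrac{a_j}{\sum_k a_k}\bigr)=\text{Poi}(\ell_n a_i a_j)=\text{Poi}(\ell_n a_{ij})$ with independence across marks $j$. Your explicit invocation of the Poisson colouring property is precisely what the paper's calculation uses implicitly, so there is no substantive difference.
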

\begin{rmk}[General structure]
\label{remark-general}
The equality in distribution in Theorem \ref{prop:419pm27apr22}, where a marked multi-type branching process with few types has the same law as an $n$-type branching process, will be the recurring theme of this paper. 
\hfill $\vardiamond$
\end{rmk}

As the careful reader might already have noted, Propositions \ref{prop:431pm15mar22} and \ref{prop:419pm27apr22} provide us with a prescription to obtain the neighborhood shells of $G_n(A)$ from an $1$-type branching process when $A$ has rank $1$, by thinning of vertices with the same marks. More precisely, one can consider the one-type branching process $\BP_1(i_0)$, then using Proposition \ref{prop:419pm27apr22}, by calling a mark $i$ particle to be of type $i$ we obtain a branching process that has the same distribution as $\BP(i_0)$. Finally, using the thinning procedure described in Section \ref{subsec:430pm27apr22}, the neighborhood shells of $G_n(A)$ can be constructed using Proposition \ref{prop:431pm15mar22}.

\subsection{Finite rank random graphs} We introduce a class of undirected inhomogeneous random graph models on the vertex set $[n]:=\{1,2,\ldots,n\}$. These graphs may have self loops but may \emph{not} have multiple edges. Let $\mathbb{R}_{+}$ denotes the set of all non-negative real numbers, and $\N$ the set of all positive integers.    
\begin{dfn}[Rank-$k$ random graphs]
\label{def:rank-k_random_graph}
   Let $n \in \mathbb{N}$. For a given symmetric matrix $A=(a_{ij})\in \mathbb{R}_{+}^{n \times n}$ and $\ell_n \in \mathbb{R}_{+}$, $G_n(A)$ is called a \textit{rank-$k$ random graph} on the vertex set $[n]$, if the edges in $G_n(A)$ are present independently, and the probability that the edge $(i,j)$ is present is given by 
    \begin{equation}
    \label{eqn:837amnov20}
        p_{ij} = 1-\exp{\left(-\ell_n a_{ij}\right)} \quad \forall  (i,j)\in [n]\times [n],
    \end{equation}
    where the matrix $A$ has rank $k$.
\end{dfn}
\begin{rmk}[Global sparsity parameter]
In Definition \ref{def:rank-k_random_graph}, $\ell_n$ can be interpreted as the global sparsity parameter for the model. Indeed, when $\ell_n$ is a positive number independent of $n$, then the expected number of edges in $G_n(A)$ is of the order $n^2$, and if $\ell_n \to 0$ as $n\to \infty$, then using Taylor's expansion it is easily seen that the expected number of edges is of order $\ell_nn^2$.\hfill $\vardiamond$
\end{rmk}
\begin{rmk}[Bond percolation models as a special case]
\label{rem-bond-percolation}
Note that if the entries of $A$ are either $0$ or $1$, then $A$ becomes an adjacency matrix of some graph $G$ on $n$ vertices, and \eqref{eqn:837amnov20} simply becomes the bond percolation model on the ground graph $G$, that is, every edge in $G$ is kept independently with  probability $1-\exp{\left(-\ell_n \right)}$.   \hfill $\vardiamond$
\end{rmk}
\begin{rmk}[Graphs and marked branching processes]
Recall Proposition \ref{prop:431pm15mar22}, which states that graph neighborhoods have the same distribution as thinned marked branching processes. This relation allows us to study rank-$k$ random graphs by only considering their marked multi-type branching process approximation. Thus, from now on, we will mainly work with branching processes, and only discuss random graphs when necessary.\hfill $\vardiamond$
\end{rmk}

\paragraph{\bf Main results of this paper.}
With Definition \ref{def:rank-k_random_graph} in hand, we can now explain the main results of this paper. We prove that $\BP(i_0)$ has the same distribution as a specific marked two-type branching process when $\rank(A)=2$. When $\rank(A)=k>2$, on the other hand, the situation is not so clear and we need to make further assumptions. When $A$ has random dot product structure, $\BP(i_0)$ has the same distribution as a marked $k$-type branching process. When, instead, $A$ has a  Kotlov-Lov\'asz structure, $\BP(i_0)$ has the same distribution as a marked $f_k$-type branching process where $f_k\leq O(2^{k/2})$.

\paragraph{\bf Organisation of this paper.}
In Section \ref{sec:positive_dot_prduct}, we start by treating the illustrative example of the random dot-product graphs, where we prove that it can be related to a $k$-type branching process. In Remark \ref{remark-collapsing}, we also show that we can think of this model as a rank-1 model on $kn$ vertices, where blocks of $k$ vertices are collapsed to 1. We use these resulte in Section \ref{section-rank2} to show that any rank-2 random graph can be related to a 2-type branching process through its spectral decomposition. In Section \ref{section-Kotlov-Lovasz}, we describe the Kotlov-Lov\'azs setting, which can be seen as  rank-1 percolation on a graph having a finite number of vertex types. In Section \ref{sec:1126am06may2022}, we prove Proposition \ref{prop:431pm15mar22}. In Section \ref{section-rankk-special}, we discuss a model related to that of the random dot-product random graph in Section \ref{sec:positive_dot_prduct}, where we allow also for negative contributions. We close in Section 
\ref{section-conclusion} with discussion and open problems.

\section{A special case: random dot-product setting}
\label{sec:positive_dot_prduct}
In this section we show that if $A$ is the sum of dot-products of vectors with non-negative entries, then we can couple $\BP(i_0)$ to a finite type branching process. This means that we assume that $A$ admits a decomposition of the form
\begin{equation}\label{eqn:511pm15mar22}
    A=\mathbf{v}_1\mathbf{v}_1'+ \mathbf{v}_2\mathbf{v}_2'+\ldots+\mathbf{v}_k\mathbf{v}_k',
\end{equation}
for some column vectors $\mathbf{v}_1, \mathbf{v}_2,\ldots,\mathbf{v}_k \in \mathbb{R}_{+}^{n}$.
We start with this example, since it nicely illustrates our main results, and also because the rank-2 setting in the next section can be reduced to it.

\begin{rmk}[Relation to random dot product graphs]
If $A$ is of the form \eqref{eqn:511pm15mar22}, then $G_n(A)$ is similar to the random dot-product model \cite{MR3827114} when $\ell_n\to 0$. Indeed, in the random dot-product model, $p_{ij}=\ell_n a_{ij}$
with $A$ as in \eqref{eqn:511pm15mar22}. 
By \cite{Jans08a}, our model is asymptotically equivalent to the random dot-product random graph when
    \begin{equation}
    \label{eqn:257pm15jul22}
        \sum_{1\leq i<j\leq n} (\ell_n a_{ij})^3=o(1),
    \end{equation}
which amounts to a mild sparsity condition.
\end{rmk}

Let $\mathbf{v}_l=(v_{l1},\ldots,v_{ln})'$ for $l\in [n]$, and note that $a_{ij}=\sum_{l=1}^k v_{li}v_{lj}$. Consider a $k$-type marked branching process, where the set of types is $\{t_1,t_2,\ldots,t_k\}$, and the set of marks is $[n]$. Then the marked $k$-type branching process is denoted by $\MBP_k(i_0)$, and is defined as follows: 
\begin{itemize}
    \item[$\rhd$] The root is marked with $i_0$.
    \item[$\rhd$] A particle with mark $i$ gives birth to $\text{Poi}(\ell_n v_{li}\sum_{j=1}^nv_{lj})$ many particles of type $t_l$ independently for $l=1,2,\ldots,k$.
    \item[$\rhd$] The $l$ type children of a particle independently receive a mark $j$ with probability $\frac{v_{lj}}{\sum_{j=1}^n v_{lj}}$.
\end{itemize}
We can obtain an $n$-type branching process from $\MBP_k(i_0)$ by calling a particle with mark $j$ to be of type $j$. Let us denote this $n$-type branching process by $\BP(i_0)$. The next proposition shows that it has the same distribution as $\MBP(i_0)$:  
\begin{thm}[Marked $k$-type BP is $n$-type BP]
\label{prop-n-vs-k-type-BP}
$\BP(i_0)$ has the same distribution as $\MBP_k(i_0)$.
\end{thm}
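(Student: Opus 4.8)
The plan is to show that $\MBP_k(i_0)$, when its marks are reinterpreted as types (so a mark-$j$ particle becomes a type-$j$ particle), produces exactly the same offspring law as the $n$-type process $\BP(i_0)$ associated with $A = \sum_{l=1}^k \mathbf{v}_l\mathbf{v}_l'$. Since both processes have the same tree-generating mechanism applied independently at each particle, it suffices to verify that a single particle with mark (type) $i$ produces, for each target mark $j \in [n]$, a $\text{Poi}(\ell_n a_{ij})$ number of children with mark $j$, and that these counts are independent across $j$. Matching the offspring distribution of the root and an arbitrary individual then propagates to equality of the full branching processes by a standard induction on generations.

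**First** I would fix a particle of mark $i$ in $\MBP_k(i_0)$ and compute, for a fixed target mark $j$, the law of the number of type-$t_l$ children that receive mark $j$. By the definition of $\MBP_k$, this particle produces $N_l \sim \text{Poi}(\ell_n v_{li}\sum_{m=1}^n v_{lm})$ children of type $t_l$, and each such child independently gets mark $j$ with probability $v_{lj}/\sum_{m=1}^n v_{lm}$. By the Poisson thinning (splitting) property, the number of type-$t_l$ children carrying mark $j$ is therefore
\begin{equation}
\label{eqn:thinning}
\text{Poi}\Bigl(\ell_n v_{li}\Bigl(\textstyle\sum_{m=1}^n v_{lm}\Bigr)\cdot \frac{v_{lj}}{\sum_{m=1}^n v_{lm}}\Bigr) = \text{Poi}(\ell_n v_{li}v_{lj}),
\end{equation}
and, again by Poisson splitting, these counts are independent across the different target marks $j$.

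**Next** I would aggregate over the types $l = 1,\ldots,k$. For a fixed target mark $j$, the total number of children of our mark-$i$ particle that receive mark $j$ is the sum over $l$ of the quantities in \eqref{eqn:thinning}. Crucially, the children of different types $t_l$ are generated independently in $\MBP_k$, so these $k$ contributions are mutually independent, and the superposition property of Poisson random variables gives
\begin{equation}
\label{eqn:super}
\sum_{l=1}^k \text{Poi}(\ell_n v_{li}v_{lj}) \sim \text{Poi}\Bigl(\ell_n \textstyle\sum_{l=1}^k v_{li}v_{lj}\Bigr) = \text{Poi}(\ell_n a_{ij}),
\end{equation}
using $a_{ij} = \sum_{l=1}^k v_{li}v_{lj}$. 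This is precisely the offspring law of a type-$i$ particle in $\BP(i_0)$. The independence across target marks $j$ persists under this superposition, which matches the independence-across-$j$ in the definition of $\BP(i_0)$.

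**The one genuinely delicate point**, rather than a computational obstacle, is the bookkeeping that guarantees \emph{joint} independence across all target marks $j \in [n]$ simultaneously (not merely pairwise), since $\BP(i_0)$ requires the offspring counts $\{\text{Poi}(\ell_n a_{ij})\}_{j}$ to be jointly independent. This follows because the complete offspring vector of a mark-$i$ particle in $\MBP_k$ is obtained by independently splitting $k$ independent Poisson variables according to probability vectors, and the resulting $k \times n$ array of counts consists of jointly independent Poisson entries; summing each column $j$ preserves joint independence of the column sums. I would state this as a lemma-style observation and then close the argument by induction: assuming the two processes agree in distribution up to generation $t$, the offspring law computed above shows the $(t+1)$-st generation is generated by the same kernel from identically distributed parents, so the laws agree up to generation $t+1$, and hence the full processes coincide in distribution.
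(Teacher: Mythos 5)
Your proposal is correct and follows essentially the same route as the paper's proof: compute the per-type, per-mark offspring count via Poisson thinning to get $\mathrm{Poi}(\ell_n v_{li}v_{lj})$, superpose over the $k$ types to obtain $\mathrm{Poi}(\ell_n a_{ij})$, and propagate by induction on generations. Your explicit treatment of joint independence across target marks is a point the paper handles only tersely, but it is the same argument.
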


Theorem \ref{prop-n-vs-k-type-BP} is another example of the general principle in Remark \ref{remark-general}.

\begin{proof}
The root of both the branching processes are marked by $i_0$, and therefore they trivially have the same distribution. Let the distribution of $\BP(i_0)$ and $\MBP_k(i_0)$ be equal up to generation $l$. Now we show that, conditionally on generation $l$, the distributions of these processes are equal up to generation $l+1$. Consider a particle in generation $l$ in $\MBP_k(i_0)$, and suppose that it has mark $i$, which, in $\BP(i_0)$, means that the particle had mark $i$. Then the number of children of this particle that are of type $t_l$ and receive the mark $j$ has distribution
    \begin{equation}
    \text{Poi}(\ell_n v_{li}\sum_{j=1}^nv_{lj}\frac{v_{lj}}{\sum_{j=1}^n v_{lj}})=\text{Poi}(\ell_nv_{li}v_{lj})
    \end{equation}
independently for $l \in[k]$. Therefore, the distribution of the total number of children with mark $j$ has distribution             
    \begin{equation}
    \text{Poi}(\ell_n\sum_{l=1}^k v_{li}v_{lj})=\text{Poi}(\ell_na_{ij}).
    \end{equation}
Now, since the distribution of the marks are also independent, the particle gives birth to $\text{Poi}(\ell_na_{ij})$ many particle of mark $j$ independently for $j \in [n]$. Since in $\MBP_k(i_0)$ a particle with mark $j$ in $\BP(i_0)$ is called type $j$, the distributions of $\MBP_k(i_0)$ and $\BP(i_0)$ are equal. 
\end{proof}
Let us close this section by relating our results, in the random graph context, to collapsing of a rank-1 model on a larger vertex set:

\begin{rmk}[Rank-$k$ random graphs obtained through collapsing rank-$1$ models]
\label{remark-collapsing}
Note that the structure of $A$ in \eqref{sec:positive_dot_prduct} ensures that the rank of $A$ is at most $k$. It is possible to also obtain $G_n(A)$ using $k$ independent rank-$1$ random graphs. Indeed, let $\text{Explode}_i:=\{i_1,i_2,\ldots,i_k\}$ for $i\in [n]$. On the vertex set $V_j:=\{i_j\colon i\in [n]\}$, we construct the following random graph: for $h,l \in [n]$, there is an edge between $h_j$ and $l_j$ with probability  $1-\exp(-\ell_n v_{jh}v_{jl})$ independently of each other and everything else. We collapse the set $\text{Explode}_h$ to a single vertex $h$, so that there is an edge between $h$ and $l$ precisely when there was an edge between $\text{Explode}_h$ and $\text{Explode}_l$. From the construction, it is easy to see that the probability that there is an edge between $h$ and $l$ is equal to $1-\exp(-\ell_n \sum_{j=1}^k v_{jh}v_{jl})=1-\mathrm{e}^{-\ell_n a_{ij}}$.\hfill $\vardiamond$
\end{rmk}

\section{Rank-2 branching processes}
\label{section-rank2}
In this section we consider the case $\rank(A)=2$, and show that has the same distribution as a two-type branching process. Here, we make crucial use of the results for the random dot-product case (in {\bf Case II} below), to which we reduce our proof. Using the spectral decomposition of $A$ we can write 
\begin{equation}
    A= \lambda_1\mathbf{v}_1\mathbf{v}_1'+ \lambda_2\mathbf{v}_2\mathbf{v}_2',
\end{equation}
where $\lambda_1 \geq \lambda_2$ are the non-zero eigenvalues of $A$, and $\mathbf{v}_1$ and $\mathbf{v}_2$ are the corresponding eigenvectors. By the Perron-Frobenius theorem, all entries of $\mathbf{v}_1$ are non-negative, and $\lambda_1 \geq |\lambda_2|$. We distinguish two cases:
\medskip

\paragraph{\bf Case I of $\lambda_2<0$.} We start with a few observations. Firstly, we have that $a_{ij}=  \lambda_1v_{1i}v_{1j}+ \lambda_2v_{2i}v_{2j}$ for $i,j\in [n]$, and since $a_{ii}$ are non-negative for all $i\in [n]$, we get $\lambda_1v_{1i}^2+ \lambda_2v_{2i}^2 \geq 0$. Now using $\lambda_2<0$, we can write $\lambda_1v_{1i}^2\geq  |\lambda_2|v_{2i}^2 $. Therefore, $\sqrt{\lambda_1}v_{1i}\geq  \sqrt{|\lambda_2|}|v_{2i}|$, which in turn gives $\sqrt{\lambda_1}v_{1i}-  \sqrt{|\lambda_2|}v_{2i}\geq 0$ and $\sqrt{\lambda_1}v_{1i}+ \sqrt{|\lambda_2|}v_{2i}\geq 0$ for all $i\in [n]$. 

Denote $s_{+} := \tfrac{1}{\sqrt{2}}\sum_{i=1}^n (\sqrt{\lambda_1}v_{1i}+ \sqrt{|\lambda_2|}v_{2i})$, and   $s_{-} := \tfrac{1}{\sqrt{2}}\sum_{i=1}^n( \sqrt{\lambda_1}v_{1i}- \sqrt{|\lambda_2|}v_{2i})$.

We define a marked two-type branching process with marks $[n]$, and types $t_1$ and $t_2$. The process is denoted by $\MBP_2(i_0)$, where
\begin{itemize}
    \item[$\rhd$] the root is marked with $i_0$;
    \item[$\rhd$] a particle marked with $i$ gives birth to $\text{Poi}\left( \tfrac{\ell_n}{\sqrt{2}}(\sqrt{\lambda_1}v_{1i}- \sqrt{|\lambda_2|}v_{2i}) s_{+}\right)$ many children of type $t_1$, and $\text{Poi}\left( \tfrac{\ell_n}{\sqrt{2}}(\sqrt{\lambda_1}v_{1i}+ \sqrt{|\lambda_2}|v_{2i}) s_{-}\right)$ many children of type $t_2$;
    \item[$\rhd$] the $t_1$ type children of a particle independently receive a mark $j$ with probability  $\tfrac{1}{\sqrt{2}s_{+}}(\sqrt{\lambda_1}v_{1i}+ \sqrt{|\lambda_2|}v_{2i})$, and the $t_2$ type children of a particle independently receive a mark $j$ with probability $\tfrac{1}{\sqrt{2}s_{-}}(\sqrt{\lambda_1}v_{1i}- \sqrt{|\lambda_2|}v_{2i})$.
\end{itemize}
The next proposition shows that $\MBP_2(i_0)$ has the same distribution as $\BP(i_0)$:   
\begin{thm}[$n$-type branching process is marked two-type branching process]
$\MBP_2(i_0)$ has the same distribution as $\BP(i_0)$.
\end{thm}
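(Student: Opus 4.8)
The plan is to mimic the inductive coupling argument from the proof of Theorem \ref{prop-n-vs-k-type-BP}, reducing Case I to the already-established random dot-product setting. The key observation is that although the spectral decomposition $A = \lambda_1 \mathbf{v}_1\mathbf{v}_1' + \lambda_2 \mathbf{v}_2\mathbf{v}_2'$ has a \emph{negative} coefficient $\lambda_2 < 0$, we can re-express $a_{ij}$ as a genuine sum of two products of nonnegative quantities. Indeed, writing $x_i := \sqrt{\lambda_1}v_{1i}$ and $y_i := \sqrt{|\lambda_2|}v_{2i}$, we have $a_{ij} = x_i x_j - y_i y_j$. The algebraic trick is that
\begin{equation*}
a_{ij} = \tfrac{1}{2}(x_i - y_i)(x_j + y_j) + \tfrac{1}{2}(x_i + y_i)(x_j - y_j),
\end{equation*}
which one verifies by expanding: the cross terms $x_i y_j$ and $y_i x_j$ cancel, leaving $x_i x_j - y_i y_j$. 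Crucially, the observations at the start of Case I show that $x_i - y_i = \sqrt{\lambda_1}v_{1i} - \sqrt{|\lambda_2|}v_{2i} \geq 0$ and $x_i + y_i \geq 0$ for all $i$, so both factors in each product are nonnegative. This is exactly the rank-$2$ random dot-product form \eqref{eqn:511pm15mar22} with $\mathbf{w}_1 = \tfrac{1}{\sqrt{2}}(\mathbf{x}-\mathbf{y})$ and $\mathbf{w}_2 = \tfrac{1}{\sqrt{2}}(\mathbf{x}+\mathbf{y})$, except that the two vectors pair up across the asymmetric product rather than each being squared.

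The main body of the proof will be a generation-by-generation induction identical in structure to that of Theorem \ref{prop-n-vs-k-type-BP}. First I would observe that the roots of both processes carry mark $i_0$, giving the base case trivially. For the inductive step, I would condition on generation $l$ agreeing in distribution and compute, for a particle of mark $i$, the number of its children in $\MBP_2(i_0)$ that are of type $t_1$ \emph{and} receive mark $j$. By the Poisson thinning property (splitting a Poisson count according to independent mark assignments), this count is
\begin{equation*}
\text{Poi}\!\left(\tfrac{\ell_n}{\sqrt{2}}(\sqrt{\lambda_1}v_{1i} - \sqrt{|\lambda_2|}v_{2i})\, s_{+} \cdot \tfrac{1}{\sqrt{2}s_{+}}(\sqrt{\lambda_1}v_{1j} + \sqrt{|\lambda_2|}v_{2j})\right) = \text{Poi}\!\left(\tfrac{\ell_n}{2}(x_i - y_i)(x_j + y_j)\right),
\end{equation*}
and symmetrically the $t_2$-children with mark $j$ number $\text{Poi}(\tfrac{\ell_n}{2}(x_i + y_i)(x_j - y_j))$. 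Adding these two independent Poisson variables gives a single $\text{Poi}(\ell_n a_{ij})$ count of mark-$j$ children, matching the offspring law of $\BP(i_0)$. Since mark assignments are independent across $j$ and across types, this recovers the full conditional law of generation $l+1$ in $\BP(i_0)$, closing the induction.

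A couple of points merit care rather than difficulty. I should make sure the mark-probability assignments in the definition of $\MBP_2(i_0)$ are genuine probability distributions, i.e. that $\sum_j \tfrac{1}{\sqrt{2}s_{+}}(\sqrt{\lambda_1}v_{1j} + \sqrt{|\lambda_2|}v_{2j}) = 1$; this follows directly from the definition of $s_{+}$, and likewise for $s_{-}$. I would also flag the edge case where $s_{+}=0$ or $s_{-}=0$ (forcing the corresponding factor vector to vanish), in which case that type produces no offspring and the assignment is vacuous. The genuinely substantive step is the algebraic rewriting $a_{ij} = \tfrac{1}{2}(x_i-y_i)(x_j+y_j) + \tfrac{1}{2}(x_i+y_i)(x_j-y_j)$ together with the nonnegativity of each factor; once this is in place the coupling is routine Poisson bookkeeping. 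I expect the only real obstacle is presentational: keeping the $t_1$/$t_2$ and $s_{+}$/$s_{-}$ pairings consistent with the slightly asymmetric definition given for $\MBP_2(i_0)$, where the offspring intensity of $t_1$ uses $(x_i - y_i)s_{+}$ but the mark probability uses $(x_j + y_j)$, so the product telescopes correctly to $(x_i - y_i)(x_j + y_j)$.
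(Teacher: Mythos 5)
Your proposal is correct and follows essentially the same route as the paper: the same generation-by-generation induction, the same Poisson thinning computation for the $t_1$- and $t_2$-children with mark $j$, and the same algebraic identity $a_{ij}=\tfrac{1}{2}(x_i-y_i)(x_j+y_j)+\tfrac{1}{2}(x_i+y_i)(x_j-y_j)$ underlying the cancellation. Your added remarks on the mark probabilities summing to one and the degenerate case $s_\pm=0$ are sensible housekeeping but do not change the argument.
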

\begin{proof}
The root of both the branching processes are marked by $i_0$, and therefore they trivially have the same distribution. Let the distribution of $\BP(i_0)$ and $\MBP_2(i_0)$ be equal up to generation $l$. Now we show that, conditionally on generation $l$, the distributions of these processes are equal up to generation $l+1$. Consider a particle in generation $l$ in $\MBP_2(i_0)$, and suppose that the particle had mark $i$. Then the number of children of this particle that are of type $t_1$, and receive the mark $j$ is 
\begin{equation}\label{eqn:248pm17mar22}
\text{Poi}\left( \tfrac{\ell_n}{\sqrt{2}}(\sqrt{\lambda_1}v_{1i}- \sqrt{|\lambda_2|}v_{2i}) s_{+} \tfrac{1}{\sqrt{2}s_{+}}(\sqrt{\lambda_1}v_{1j}+ \sqrt{|\lambda_2|}v_{2j})\right),
\end{equation}
and similarly the number of children of this particle that is of type $t_2$ and receives the mark $j$ is
\begin{equation}\label{eqn:249pm17mar22}
\text{Poi}\left( \tfrac{\ell_n}{\sqrt{2}}(\sqrt{\lambda_1}v_{1i}+ \sqrt{|\lambda_2|}v_{2i}) s_{-} \tfrac{1}{\sqrt{2}s_{-}}(\sqrt{\lambda_1}v_{1j}- \sqrt{|\lambda_2|}v_{2j})\right),
\end{equation}
Therefore, using \eqref{eqn:248pm17mar22} and \eqref{eqn:249pm17mar22}, the distribution of the total number of children with mark $j$ is 
 \begin{equation}
     \text{Poi}\left(\ell_n(\lambda_1v_{1i}v_{1j}- |\lambda_2|v_{2i}v_{2j})\right)= \text{Poi}\left(\ell_n(\lambda_1v_{1i}v_{1j}- |\lambda_2|v_{2i}v_{2j})\right) \stackrel{d}{=} \text{Poi}\left(\ell_na_{ij}\right).
     \end{equation}
Since the marks are also independent, the particle gives birth to $\text{Poi}(\ell_na_{ij})$ many particle of mark $j$ independently for $j \in [n]$. Since in $\MBP_2(i_0)$ a particle with mark $j$ has type $j$ in $\BP(i_0)$, the distribution of $\MBP_2(i_0)$ and $\BP(i_0)$ are therefore equal. 
\end{proof}
\medskip

\paragraph{\bf Case II: $\lambda_2>0$.}
In this case, $A$ is positive definite, and our aim is to show that here the problem boils down to a particular case of \eqref{eqn:511pm15mar22}. More precisely, we prove the following proposition that shows that this case can be reduced to the random dot-product case, the proof of which involves basic linear algebra and plane geometry:

\begin{thm}[Positive-definite matrices and random dot-products]
\label{prop:730pm17mar22}
Let $A$ be a symmetric positive-definite $n\times n$ matrix with non-negative entries and $\rank{(A)}=2$. Then there exists two non-negative vectors $\mathbf{w}_{1}$ and $\mathbf{w}_{2}$ such that  
\begin{equation}\label{eqn:728pm17mar22}
     A= \mathbf{w}_{1}\mathbf{w}'_{1} + \mathbf{w}_{2}\mathbf{w}'_{2}.
\end{equation}
\end{thm}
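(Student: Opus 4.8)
The plan is to produce an explicit rotation that simultaneously keeps both coordinates non-negative, exploiting positive-definiteness. Start from the spectral decomposition $A = \lambda_1 \mathbf{v}_1 \mathbf{v}_1' + \lambda_2 \mathbf{v}_2 \mathbf{v}_2'$ with $\lambda_1 \geq \lambda_2 > 0$. Writing $\mathbf{u}_1 := \sqrt{\lambda_1}\, \mathbf{v}_1$ and $\mathbf{u}_2 := \sqrt{\lambda_2}\, \mathbf{v}_2$, we have $A = \mathbf{u}_1 \mathbf{u}_1' + \mathbf{u}_2 \mathbf{u}_2'$, which is already a dot-product decomposition of the desired form \eqref{eqn:728pm17mar22}, except that $\mathbf{u}_2$ need not have non-negative entries. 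The key observation is that for \emph{any} $2\times 2$ orthogonal matrix $R$, setting $(\mathbf{w}_1 \mid \mathbf{w}_2) := (\mathbf{u}_1 \mid \mathbf{u}_2) R$ preserves the decomposition, since $\mathbf{w}_1 \mathbf{w}_1' + \mathbf{w}_2 \mathbf{w}_2' = (\mathbf{u}_1 \mid \mathbf{u}_2) R R' (\mathbf{u}_1 \mid \mathbf{u}_2)' = A$. So the entire problem reduces to choosing a single rotation angle $\theta$ so that both rotated vectors land in the non-negative orthant.

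The geometric reformulation I would use is as follows. For each vertex $i$, associate the planar point $\mathbf{p}_i := (u_{1i}, u_{2i}) \in \mathbb{R}^2$. Then $a_{ij} = \mathbf{p}_i \cdot \mathbf{p}_j$, and the non-negativity of all entries of $A$ says precisely that every pair of points $\mathbf{p}_i, \mathbf{p}_j$ subtends an angle of at most $\pi/2$ at the origin (including $a_{ii} = \|\mathbf{p}_i\|^2 \geq 0$ trivially, and $i=j$ giving nothing new). Thus all the points $\mathbf{p}_i$ lie in a cone of angular width at most $\pi/2$: if $\alpha_i$ denotes the polar angle of $\mathbf{p}_i$, then $\max_i \alpha_i - \min_i \alpha_i \leq \pi/2$. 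Applying the rotation $R$ by an appropriate angle $\theta$ rotates all polar angles by $\theta$; choosing $\theta$ to bring the angular interval $[\min_i \alpha_i, \max_i \alpha_i]$ inside $[0, \pi/2]$ is possible exactly because that interval has width at most $\pi/2$. After this rotation both coordinates of every $\mathbf{p}_i$ are non-negative, i.e. $\mathbf{w}_1 = (u_{1i}\cos\theta + u_{2i}\sin\theta)_i$ and $\mathbf{w}_2 = (-u_{1i}\sin\theta + u_{2i}\cos\theta)_i$ both have non-negative entries, which is the claim.

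The main obstacle is establishing rigorously the cone-width bound: that non-negativity of all pairwise inner products forces all the points into a quarter-plane cone. The inequality $\mathbf{p}_i \cdot \mathbf{p}_j \geq 0$ for a single pair only says the angle between those two points is at most $\pi/2$, and a priori pairwise constraints of this kind do not obviously globalize to a single cone of width $\pi/2$ (for points spread around a circle one can have all \emph{consecutive} pairs close yet the total spread exceed $\pi/2$). The clean way to handle this is to note that Perron–Frobenius gives $\mathbf{u}_1 = \sqrt{\lambda_1}\,\mathbf{v}_1$ with all entries non-negative, so every $\mathbf{p}_i$ already has non-negative first coordinate and hence polar angle $\alpha_i \in [-\pi/2, \pi/2]$; all points lie in the right half-plane. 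Within a half-plane, the pairwise-angle-at-most-$\pi/2$ condition applied to the two extreme points $\mathbf{p}_{i^*}$ (largest $\alpha$) and $\mathbf{p}_{j^*}$ (smallest $\alpha$) directly yields $\alpha_{i^*} - \alpha_{j^*} \leq \pi/2$, since in a half-plane the angle between two vectors equals the difference of their polar angles. This gives the cone-width bound, and the rotation $\theta = -\min_i \alpha_i$ (or $\theta$ chosen to center the interval) completes the argument. I would also note the edge cases — points at the origin contribute nothing and can be ignored — to keep the angle definitions well-posed.
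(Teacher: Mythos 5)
Your proposal is correct and follows essentially the same route as the paper: write $a_{ij}=\mathbf{p}_i\cdot\mathbf{p}_j$ for planar points coming from the spectral decomposition, observe that pairwise non-negative inner products confine all the points to a cone of angular width at most $\pi/2$ (the paper's Lemma \ref{lem:708pm17mar22}), and rotate that cone into the non-negative orthant. Your extra step of invoking Perron--Frobenius to place all points in a half-plane before comparing the two extreme polar angles is a slightly more careful justification of the cone-width bound than the paper's argument, but it is the same idea.
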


Note that we can write
$a_{ij} = \lambda_1v_{1i} v_{1j} + \lambda_2 v_{2i} v_{2j}$, for $i,j\in [n]$, which in turn can be written as
\begin{equation}\label{eqn:614pm17mar22}
     a_{ij} = \begin{pmatrix}
     \sqrt{\lambda_1}v_{1i}, &  \sqrt{\lambda_2} v_{2i}
    \end{pmatrix}
    \begin{pmatrix}
    \sqrt{\lambda_1}v_{1j}\\
    \sqrt{\lambda_2} v_{2j}
    \end{pmatrix} := x_i'x_j,
\end{equation}
where $x_i=(x_{1i}, x_{2i})'$. We show that the model can be transformed into a model for which $A$ admits a random dot-product decomposition as in \eqref{eqn:511pm15mar22} with $k=2$. 

First note that in \eqref{eqn:614pm17mar22} if we replace $x_i$ by $Wx_i$, where $W$ is an orthogonal matrix then the model remains the same. Moreover, since $a_{ij}\geq 0$, we we have $x'_ix_j\geq 0$ for all $i,j \in [n]$. 
\begin{lem}[Rotating vectors to non-negative cone]
\label{lem:708pm17mar22}
	Let $x_1,x_2,\ldots,x_n \in \mathbb{R}^2$ be points on the plane such that $x'_ix_j\geq 0$ for all $i,j \in [n]$. Then there is an orthogonal transformation such that the entries of $Wx_i$'s are non-negative for all $i\in[n]$.
\end{lem}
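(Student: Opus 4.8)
The plan is to show that the hypothesis $x_i'x_j \ge 0$ forces all the nonzero vectors to lie inside a single closed cone of angular width at most $\pi/2$; rotating that cone onto the first quadrant then produces the desired $W$. First I would dispose of the zero vectors, which map to the origin under any orthogonal map and so impose no constraint, and assume at least one $x_i$, say $x_1$, is nonzero (if all vanish the statement is vacuous, as $W$ can be arbitrary).

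Next I would fix $x_1$ as a reference direction. Since $x_j'x_1 \ge 0$ for every $j$, each $x_j$ lies in the closed half-plane $\{y : y'x_1 \ge 0\}$, that is, within angle $\pi/2$ of $x_1$ on either side. Because this half-plane is a $180^\circ$ window, the signed angle $\alpha_j \in [-\pi/2,\pi/2]$ of each nonzero $x_j$ measured from $x_1$ is unambiguously defined, with no wrap-around. This reduces the circular problem to a linear one on the interval $[-\pi/2,\pi/2]$, which is what makes the subsequent ``maximum'' and ``minimum'' of the angles meaningful.

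The key step is to bound the spread of the $\alpha_j$. Let $\alpha^{+}=\max_j \alpha_j$ and $\alpha^{-}=\min_j \alpha_j$, attained by nonzero vectors $x_a$ and $x_b$. The angle between $x_a$ and $x_b$ equals $\alpha^{+}-\alpha^{-}\in[0,\pi]$, and the hypothesis $x_a'x_b \ge 0$ gives $\cos(\alpha^{+}-\alpha^{-})\ge 0$, hence $\alpha^{+}-\alpha^{-}\le \pi/2$. Therefore every nonzero $x_j$ satisfies $\alpha_j\in[\alpha^{-},\alpha^{+}]$, an arc of width at most $\pi/2$, so all vectors lie in one cone of angular width $\le \pi/2$. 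Finally I would take $W$ to be the rotation sending this cone's lower edge to the positive horizontal axis; since its width is at most $\pi/2$, the entire cone lands inside $\{(a,b):a\ge 0,\, b\ge 0\}$, so every coordinate of each $Wx_i$ is non-negative.

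The main obstacle, and really the only place requiring care, is the extreme-angle step: one must verify that the two vectors realizing the maximal angular gap are themselves a legitimate pair for the hypothesis, so that their non-negative dot product caps the total spread at $\pi/2$. The half-plane reduction is precisely what makes ``signed angle'' and ``max/min'' well behaved, avoiding the $360^\circ$ wrap-around that would otherwise destroy the ordering; I would therefore be explicit that all relative angles genuinely lie in a single $180^\circ$ window before comparing them.
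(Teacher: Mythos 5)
Your proof is correct and follows essentially the same route as the paper: fix $x_1$ as a reference, use $x_j'x_1\ge 0$ to place every vector within a half-plane around $x_1$, identify the two angular extremes, and use their mutual non-negative dot product to bound the total spread by $\pi/2$ before rotating into the first quadrant. Your version is in fact slightly more careful than the paper's (explicit treatment of zero vectors and of why the signed angles avoid wrap-around), but the underlying argument is identical.
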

\begin{proof}
	Note that $x'_ix_j\geq 0$ implies that $0\leq \theta_{ij} \leq \frac{\pi}{2}$, where $\theta_{ij}$ is the angle between the vectors $x_i$ and $x_j$. Now starting from $x_1$, we search for all the next vector that are anticlockwise located within an angle $\pi/2$, and call the vector with largest (anticlockwise) angle with $x_1$ to be $x_L$. Similarly, we consider all the vectors within angle $\pi/2$, and call $x_R$ the vector with largest (clockwise) angle with $x_1$. Note that all vectors lie in between these two vectors. Also the angle between $x_L$ and $x_R$ (anticlockwise from $x_R$) is less than or equal to $\pi/2$. Therefore ,we can rotate all the points using a transformation $W$, say, that puts all vectors $x_1,x_2,\ldots,x_n$ in the positive orthant, as required.
\end{proof}

With Lemma \ref{lem:708pm17mar22} in hand, we are ready to prove Theorem \ref{prop:730pm17mar22}:

\begin{proof}[Proof of Theorem \ref{prop:730pm17mar22}]
Using Lemma \ref{lem:708pm17mar22}, and \eqref{eqn:614pm17mar22}, we get that $a_{ij}= (Wx_i)'Wx_j$ for all $i,j \in [n]$, and that the entries of $Wx_i$ are non-negative. Let $Wx_i = (w_{1i}, w_{2i})'$, and $\mathbf{w}_{i} = (w_{i1}, w_{i2},\ldots, w_{i2})'$, for $i=1,2$. It is now easy to check that $A= \mathbf{w}_{1}\mathbf{w}'_{1} + \mathbf{w}_{2}\mathbf{w}'_{2}$.
\end{proof}

\section{Rank $k$: Kotlov-Lov{\'a}sz model}
\label{section-Kotlov-Lovasz}
In this section, we discuss a model that can be considered a rank-1 percolation on a finite graph. We start by describing the set up.

\subsection{Adjacency matrix set up}
In this section we start by considering the case when the entries of $A$ are either $0$ or $1$, and all the diagonals are $0$. The results in this section crucially rely on a beautiful theorem by Kotlov and Lov{\'a}sz \cite{kotlov1996rank}, and, inspired by this, we call $G_n(A)$ to be $k$-Kotlov-Lov{\'a}sz model when $A$ has $0-1$ entries with diagonals equal to zero and $\rank(A)=k$. \par

  We first recall a result from Kotlov and Lov{\'a}sz \cite{kotlov1996rank} that will be essential in our analysis. A pair of non-adjacent vertices in a graph with the same neighbors is called a pair of {\em twin points}. The following is the main result in \cite{kotlov1996rank}:
\begin{thm}[Rank adjacency matrix of twin-free graph]\label{thm:1129pm18mar22}
Let $G$ be a twin-free simple and undirected graph, and let $k$ the rank of its adjacency matrix. Then
\begin{equation}
    \#\text{vertices of }G \leq O(2^{k/2}):=f_k.
\end{equation}
\end{thm}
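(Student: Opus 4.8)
The plan is to recast the statement as a counting problem for $0$--$1$ vectors in $\mathbb{R}^k$ and then to bound their number. Write $A$ for the adjacency matrix of $G$, a symmetric $0$--$1$ matrix with zero diagonal and $\rank(A)=k$. First I would record the dictionary between twins and $A$: two non-adjacent vertices $i,j$ are twins exactly when the rows $a_i$ and $a_j$ of $A$ coincide, so $G$ is twin-free if and only if all rows of $A$ are distinct. Since $A$ is symmetric, the classical fact that a symmetric matrix possesses a nonsingular principal submatrix whose size equals its rank supplies a set $S$ of $k$ vertices with $B:=A[S,S]$ nonsingular. The rows indexed by $S$ are then independent and span the row space, so every row satisfies $a_v=b_vB^{-1}A[S,:]$ with $b_v:=(a_{vs})_{s\in S}\in\{0,1\}^k$. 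Hence $v\mapsto b_v$ is injective by distinctness of rows, which already yields the crude bound $\#V(G)\le 2^{k}$.

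\textbf{Extracting the symmetric form.} The improvement from $2^k$ to $O(2^{k/2})$ is where the symmetry of $A$ must be exploited. Using $a_{su}=a_{us}$ one computes, for all vertices $u,v$,
\[
a_{uv}=b_u\,B^{-1}b_v^{\top}\in\{0,1\},\qquad a_{vv}=b_v\,B^{-1}b_v^{\top}=0 .
\]
Thus the distinct vectors $\{b_v\}\subseteq\{0,1\}^k$ carry a nonsingular symmetric bilinear form $Q(x,y):=x\,B^{-1}y^{\top}$ taking only the values $0$ and $1$, and every $b_v$ is isotropic for $Q$. Diagonalising $Q$ over $\mathbb{R}$ with signature $(p,q)$, $p+q=k$, I may equivalently present the data as distinct vectors $w_v\in\mathbb{R}^k$ with $\langle w_u,w_v\rangle_{p,q}\in\{0,1\}$ and $\langle w_v,w_v\rangle_{p,q}=0$. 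The goal is then to bound the number of such mutually admissible isotropic vectors by $(\sqrt{2})^{k}$.

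\textbf{The halving step (main obstacle).} The remaining and genuinely hard point is to convert the single scalar constraint per pair into a saving of a factor $2$ in the exponent, so that a vertex is pinned down by about $k/2$ rather than $k$ bits. The strategy I would pursue is to argue that the isotropy condition $\langle w_v,w_v\rangle_{p,q}=0$ rigidly couples the ``positive'' and ``negative'' blocks of each $w_v$: on a maximal subfamily whose restricted Gram matrix is nonsingular, one block of coordinates becomes a function of the other, leaving roughly $k/2$ free coordinates and hence at most $(\sqrt{2})^{k}$ admissible vectors. Making this precise — including the treatment of the degenerate signatures $p=0$ or $q=0$ and absorbing the loss into the constant hidden in $O(\cdot)$ — is exactly the content of the Kotlov--Lov\'asz theorem, and I expect it to be the principal difficulty; the reductions in the first two paragraphs are routine by comparison.
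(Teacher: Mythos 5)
There is a genuine gap here, and you have already put your finger on it yourself. First, a point of comparison: the paper does not prove this statement at all — it is quoted as the main result of Kotlov and Lov\'asz \cite{kotlov1996rank}, so the ``paper's own proof'' is a citation. Your first two paragraphs are correct and standard: twin-freeness (in the paper's sense, non-adjacent vertices with identical neighborhoods) is equivalent, for a symmetric $0$--$1$ matrix with zero diagonal, to all rows of $A$ being distinct; a symmetric matrix of rank $k$ has a $k\times k$ nonsingular principal submatrix $B$; each row satisfies $a_v=b_vB^{-1}A[S,:]$ with $b_v\in\{0,1\}^k$, so $v\mapsto b_v$ is injective and $\#V(G)\le 2^k$; and $a_{uv}=b_uB^{-1}b_v^{\top}$ with $b_vB^{-1}b_v^{\top}=0$. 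But the entire content of the theorem is the improvement from $2^k$ to $O(2^{k/2})$, and that step is not proved — it is replaced by a strategy that does not work as described. The isotropy condition $\langle w_v,w_v\rangle_{p,q}=0$ is a \emph{single} quadratic equation per vector; it cuts the admissible set down by at most one dimension, not by half, and there is no reason why one signature block of $w_v$ should be a function of the other (the light cone of an indefinite form on $\mathbb{R}^{p,q}$ is a hypersurface, not a graph over a $k/2$-dimensional coordinate subspace). So the factor-of-two saving in the exponent cannot be extracted this way, and as you candidly note, what remains is precisely the Kotlov--Lov\'asz theorem itself.

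For the record, the published argument of \cite{kotlov1996rank} is not a direct count of isotropic $0$--$1$ vectors for an indefinite bilinear form; it is a structural, graph-theoretic induction on the rank, in which one passes to a smaller reduced graph whose rank has dropped by two while the vertex count has at most (roughly) halved, yielding $n\le O(2^{r/2})$ after iterating. If you want a self-contained treatment you would need to reproduce that induction; otherwise the honest options are to cite the result, as the paper does, or to settle for your correctly proved bound $\#V(G)\le 2^{k}$ — which, however, would degrade $f_k$ from $O(2^{k/2})$ to $O(2^{k})$ in Corollary \ref{cor:1223pm18mar22} and in everything downstream that uses it.
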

Theorem \ref{thm:1129pm18mar22} has the following corollary that will be useful for our purposes:
\begin{cor}[Partitioning of graph with rank-$k$ adjacency matrix]\label{cor:1223pm18mar22}
Let $G$ be a simple and undirected graph on $n$ vertices, and let $k$ be the rank of its adjacency matrix. Then the vertices of $G$ can be partitioned into at most $f_k=O(2^{k/2})$ parts such that all the vertices in a class have identical neighbors. 
\end{cor}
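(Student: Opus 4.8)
The plan is to derive Corollary~\ref{cor:1223pm18mar22} from Theorem~\ref{thm:1129pm18mar22} by passing to the \emph{twin-free quotient} of $G$. First I would observe that having identical neighbors is essentially an equivalence relation among the vertices, so the natural strategy is to define the relation and then bound the number of its classes. Concretely, I would declare two vertices $u$ and $v$ to be \emph{equivalent} if they are non-adjacent and have exactly the same neighborhood, i.e. $N(u)=N(v)$ (where, since $G$ is simple with zero diagonal, neither $u$ nor $v$ lies in its own neighborhood). The first real step is to check that this is genuinely an equivalence relation. Reflexivity and symmetry are immediate; the only point needing a word is transitivity: if $N(u)=N(v)$ and $N(v)=N(w)$ then $N(u)=N(w)$, and one must also confirm the non-adjacency is consistent across the class (if $N(u)=N(v)$ and $u,v$ were adjacent, then $u\in N(v)=N(u)$, forcing a self-loop, contradicting the zero-diagonal assumption). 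This shows each equivalence class is an independent set of mutual twins, matching the twin-point definition.

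Next I would form the quotient graph $G'$ on the set of equivalence classes, placing an edge between two distinct classes $[u]$ and $[v]$ exactly when $u\leftrightarrow v$ in $G$; this is well-defined precisely because all vertices in a class share the same external neighbors. The key structural claim is then twofold: $G'$ is twin-free, and $\rank$ of its adjacency matrix is still $k$. Twin-freeness of $G'$ holds because any two distinct classes with identical neighborhoods in $G'$ would correspond to vertices of $G$ with identical neighborhoods, hence would already have been merged. For the rank, I would argue that collapsing twins corresponds to deleting duplicate (equal) rows and columns from the adjacency matrix $A$ of $G$, and duplicated rows/columns do not change the rank; so the adjacency matrix $A'$ of $G'$ has the same rank $k$ as $A$.

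With these two facts in place, Theorem~\ref{thm:1129pm18mar22} applies directly to $G'$: since $G'$ is twin-free with rank-$k$ adjacency matrix, it has at most $f_k=O(2^{k/2})$ vertices. But the vertices of $G'$ are exactly the equivalence classes of $G$, so the number of classes is at most $f_k$, which is the desired partition of $V(G)$ into at most $f_k$ parts each consisting of vertices with identical neighbors. This completes the deduction.

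I expect the main obstacle to be the rank claim rather than the combinatorics of the equivalence relation. One must be careful that identifying twins really does amount to removing \emph{equal} rows of $A$ (not merely proportional or otherwise dependent ones), so that the rank is provably preserved and not just bounded above; equal neighborhoods give literally equal $0/1$ rows, which settles it, but this is the step where the zero-diagonal and $0$--$1$ hypotheses of the Kotlov--Lov\'asz setting are genuinely used, and I would make sure to invoke them explicitly. A secondary subtlety is the precise handling of adjacency \emph{within} a class: the definition of twin points requires non-adjacency, and I would verify that equal rows of a zero-diagonal $0/1$ matrix automatically force non-adjacency, so that each class is indeed a set of pairwise twins and the hypothesis of Theorem~\ref{thm:1129pm18mar22} is legitimately met after the quotient.
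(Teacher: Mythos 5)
Your proof is correct and follows essentially the same route as the paper: both pass to the twin equivalence classes and apply Theorem~\ref{thm:1129pm18mar22} to a twin-free representative graph (the paper takes the induced subgraph on one representative per class, which is isomorphic to your quotient $G'$). The only cosmetic difference is that you show the rank is \emph{exactly} preserved by deleting duplicate rows and columns, whereas the paper only uses the weaker and more immediate fact that a principal submatrix has rank at most $k$ — which already suffices since the bound $f_k$ is monotone in the rank.
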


\begin{proof}
The relation that a pair of vertices are twin is an equivalence relation. Therefore we can always partition the vertex set into say, $M_n$ parts where each class is an independent set, and all the vertices in a class have identical neighbors. 
We can form a subgraph by taking one vertex from each of these $M_n$ classes, and the adjacency matrix of that subgraph must have rank less than or equal to $k$. Since the subgraph is twin-free as well, using Theorem \ref{thm:1129pm18mar22} we get that $M_n \leq f_k$.
\end{proof}
\begin{rmk}[Edge structure Kotlov-Lov{\'a}sz partition]
\label{rmk:503pm26apr22}
Note that if we consider the vertex partition as in Corollary \ref{cor:1223pm18mar22}, then between each pairs of vertex classes  either {\em all} edges are present or {\em no} edge is present. We call this a \emph{canonical partition}, and label the classes by $P_1, P_2,\ldots, P_{f_k}$. Moreover, we construct an $f_k\times f_k$ proxy adjacency matrix $\tilde{A}= (\tilde{a}_{ij})_{i,j=1}^{f_k}$ such that $\tilde{a}_{ij}= 1$ if all edges are present between vertex class $i$ and $j$, and set $\tilde{a}_{ij}= 0$ otherwise.\hfill$\vardiamond$
\end{rmk}

If we have a symmetric matrix $A$ with entries in \{0,1\} such that $\rank(A)\leq k$, then using the observations in Remark \ref{rmk:503pm26apr22}, we can partition the rows of the matrix in at most $f_k$ classes. Let us label the rows of $A$ by $[n]$, and denote the above-mentioned partition by $P_1, P_2,\ldots, P_{f_k}$. Also let the number of rows in the class $P_i$ be $n_{i}$ for $i\in[f_k]$. Define the function $\mathcal{P}\colon [n]\to [f_k]$, such that $\mathcal{P}(i)=j$ if row $i$ belongs to class $P_j$. We now define an $f_k$-type marked branching process $\MBP_{f_k}(i_0)$ as follows:

\begin{itemize}
    \item[$\rhd$] The root has mark $i_0$;
    \item[$\rhd$] If a particle has mark $i$, then it gives birth to $\text{Poi}(n_{j}\ell_n \tilde{a}_{\mathcal{P}(i)j})$ many children of type $j$ for $j\in[f_k]$;
    \item[$\rhd$] All the type $j$ children of a particle are marked with a number from $P_j$ independently and uniformly with probability $1/n_{j}$. 
\end{itemize}


\begin{thm}[Marked $f_k$-type BP is $n$-type BP]\label{prop:2111pm27apr22}
$\MBP_{f_k}(i_0)$ has the same distribution as $\BP(i_0)$.
\end{thm}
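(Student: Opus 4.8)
The plan is to follow the same generation-by-generation induction used in the proofs of Theorems \ref{prop-n-vs-k-type-BP} and the rank-2 case, exploiting the fact that this setting is precisely a rank-1 percolation superimposed on the coarse structure encoded by the proxy matrix $\tilde A$. The two processes $\MBP_{f_k}(i_0)$ and $\BP(i_0)$ share the root mark $i_0$, so they agree in distribution at generation $0$. Assuming they agree up to generation $l$, I would condition on a particle in generation $l$ carrying mark $i$ and compute, in $\MBP_{f_k}(i_0)$, the number of its children that are of type $j$ \emph{and} receive a specific mark $m \in P_j$. By the defining rules, the particle produces $\text{Poi}(n_j \ell_n \tilde a_{\mathcal{P}(i)j})$ children of type $j$, each of which is assigned a mark uniformly from $P_j$ with probability $1/n_j$.

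The key computational step is the Poisson thinning/splitting identity. Since the type-$j$ offspring count is Poisson with parameter $n_j \ell_n \tilde a_{\mathcal{P}(i)j}$, and each such offspring independently lands on a fixed $m \in P_j$ with probability $1/n_j$, the number of type-$j$ children with mark $m$ is
\begin{equation}
\text{Poi}\Big(n_j \ell_n \tilde a_{\mathcal{P}(i)j} \cdot \tfrac{1}{n_j}\Big) = \text{Poi}\big(\ell_n \tilde a_{\mathcal{P}(i)j}\big),
\end{equation}
independently across the marks in $P_j$ and across $j \in [f_k]$. The final identification uses the crucial property of the canonical partition recorded in Remark \ref{rmk:503pm26apr22}: for any $m \in P_j$ we have $a_{im} = \tilde a_{\mathcal{P}(i)\mathcal{P}(m)} = \tilde a_{\mathcal{P}(i)j}$, because all vertices in a class have identical neighbors and edges between classes are present either all-or-none. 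Hence the number of children of mark $m$ produced by the mark-$i$ particle is $\text{Poi}(\ell_n a_{im})$, matching exactly the offspring law of $\BP(i_0)$, where a mark-$i$ particle begets $\text{Poi}(\ell_n a_{im})$ children of each mark $m \in [n]$ independently.

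Combining these, the joint law of the marks in generation $l+1$ agrees between the two processes; reinterpreting a mark-$m$ particle of $\MBP_{f_k}(i_0)$ as a type-$m$ particle then yields equality in distribution at generation $l+1$, closing the induction. The only genuinely delicate point — and really the heart of the argument — is the identity $a_{im} = \tilde a_{\mathcal{P}(i)j}$ for every $m \in P_j$; everything else is the routine Poisson-splitting bookkeeping already seen in the earlier theorems. I would therefore state the induction hypothesis carefully and devote the bulk of the write-up to justifying this identity via the defining property of the canonical partition, after which the Poisson computation and the mark-to-type relabeling follow mechanically.
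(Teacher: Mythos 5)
Your proposal is correct and is exactly the argument the paper intends: the paper's own proof of Theorem \ref{prop:2111pm27apr22} is omitted (``follows from the construction''), and your generation-by-generation induction with Poisson splitting is the same scheme the paper uses for Theorem \ref{prop-n-vs-k-type-BP} and the rank-2 case, so you are in effect supplying the details the authors left out. The one point worth making explicit is that the key identity $a_{im}=\tilde a_{\mathcal{P}(i)j}$ for $m\in P_j$ also requires the within-class case $\mathcal{P}(i)=j$, where both sides vanish because each class of the canonical partition is an independent set and the diagonal of $A$ is zero.
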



\subsection{A rank-1 perturbation} Using a rank-$1$ perturbation, we can generalize the $k$-Kotlov-Lov{\'a}sz model to the case where the entries of $A$ are not necessarily in $\{0,1\}$, as follows: Suppose that $A$ is a $\{0,1\}$ matrix of rank $k$, and all the diagonal entries are equal to zero. Then consider the perturbed version of $A$, that is, the new matrix 
   \[
    B:= \diag(a_1,\ldots,a_n)A\diag(a_1,\ldots,a_n),
    \]
where $\diag(a_1,\ldots,a_n)$ is a diagonal matrix with entries $a_1,a_2,\ldots,a_n \geq 0$. Denote $\mathbf{a}= (a_1,\ldots,a_n)'$. Let us now define a marked $f_k$-type branching process $\MBP_{f_k}(i_0)$ as follows:

\begin{itemize}
    \item The root has mark $i_0$;
    \item If a particle has mark $i$, then it gives birth to $\text{Poi}\left(\ell_n a_i \tilde{a}_{\mathcal{P}(i)j}\sum_{i \in P_j}a_i\right)$ many children of type $j$ for $j\in[f_k]$;
    \item All type $j$ children of a particle receive a mark, say, $m$, from $P_j$ independently with probability $a_m/\sum_{i \in P_j}a_i$. 
\end{itemize}

\begin{thm}[Marked $f_k$-type BP is $n$-type BP]\label{prop:219pm27apr22}
$\MBP_{f_k}(i_0)$ has the same distribution as $\BP(i_0)$.
\end{thm}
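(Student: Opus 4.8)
The plan is to follow the inductive, generation-by-generation template used in the proofs of Theorems \ref{prop-n-vs-k-type-BP} and \ref{prop:2111pm27apr22}. The base case is immediate, since both roots carry the mark $i_0$. For the inductive step, I would assume that $\BP(i_0)$ and $\MBP_{f_k}(i_0)$ agree in distribution up to generation $l$, and then, conditionally on generation $l$, verify that they agree up to generation $l+1$. Since in both processes a particle's offspring depend only on its own mark, it suffices to check that a particle carrying an arbitrary mark $i$ produces the same offspring-with-marks distribution in the two processes.

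First I would compute, in $\MBP_{f_k}(i_0)$, the number of children of a mark-$i$ particle that carry a specific mark $m$. Such a particle produces $\text{Poi}(\ell_n a_i \tilde{a}_{\mathcal{P}(i)j}\sum_{m'\in P_j}a_{m'})$ children of type $j$, each independently receiving a mark $m\in P_j$ with probability $a_m/\sum_{m'\in P_j}a_{m'}$. By the splitting property of the Poisson distribution, the number of type-$j$ children with the specific mark $m\in P_j$ is
\[
\text{Poi}\!\left(\ell_n a_i \tilde{a}_{\mathcal{P}(i)j}\Big(\sum_{m'\in P_j}a_{m'}\Big)\frac{a_m}{\sum_{m'\in P_j}a_{m'}}\right)=\text{Poi}\!\left(\ell_n a_i \tilde{a}_{\mathcal{P}(i)j}\,a_m\right),
\]
and these counts are independent across $j\in[f_k]$ and across marks.

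The crucial step is that each mark $m\in[n]$ belongs to exactly one class $P_{\mathcal{P}(m)}$, so only the term $j=\mathcal{P}(m)$ contributes to the children of mark $m$; moreover, by the canonical partition of Remark \ref{rmk:503pm26apr22} (whose existence is guaranteed by Corollary \ref{cor:1223pm18mar22}), adjacency in $A$ depends only on the classes, yielding the identity $a_{im}=\tilde{a}_{\mathcal{P}(i)\mathcal{P}(m)}$ for all $i,m\in[n]$. Here I would flag the diagonal bookkeeping: since $A$ has zero diagonal and each class is an independent set, both sides vanish when $i,m$ lie in the same class (in particular when $i=m$), consistent with $b_{ii}=a_i a_{ii}a_i=0$. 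Substituting this identity, the number of children of mark $m$ produced by a mark-$i$ particle is
\[
\text{Poi}\!\left(\ell_n a_i \tilde{a}_{\mathcal{P}(i)\mathcal{P}(m)}\,a_m\right)=\text{Poi}\!\left(\ell_n a_i a_{im}a_m\right)=\text{Poi}(\ell_n b_{im}),
\]
independently over $m\in[n]$, which is precisely the offspring law of a mark-$i$ particle in the $n$-type process $\BP(i_0)$ associated with $B=\diag(\mathbf{a})A\diag(\mathbf{a})$. This closes the induction.

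The main obstacle, as in the unperturbed Kotlov-Lov\'asz setting, is not the Poisson bookkeeping but the structural identity $a_{im}=\tilde{a}_{\mathcal{P}(i)\mathcal{P}(m)}$: this is exactly where the existence of a canonical partition into $f_k=O(2^{k/2})$ classes is exploited, and it is what permits the collapse from $n$ types to $f_k$ types. The perturbation enters only by replacing the uniform within-class mark law of Theorem \ref{prop:2111pm27apr22} with the $a_m$-weighted law familiar from the rank-$1$ Theorem \ref{prop:419pm27apr22}, together with the extra factor $a_i$ in the offspring parameter; once the identity above is in hand, these two reweightings combine exactly as needed, so I expect the remaining work to be routine.
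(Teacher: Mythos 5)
Your proof is correct and is exactly the argument the paper intends: the paper itself only says ``the proof follows from the construction of $\MBP_{f_k}(i_0)$'' and omits all details, and your write-up supplies them via the same induction-plus-Poisson-splitting template used for Theorems \ref{prop:419pm27apr22} and \ref{prop-n-vs-k-type-BP}. The one point genuinely worth making explicit --- the structural identity $a_{im}=\tilde{a}_{\mathcal{P}(i)\mathcal{P}(m)}$ from the canonical partition, including the within-class/diagonal case where both sides vanish --- you identify and handle correctly.
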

\begin{proof}[Proof of Theorem \ref{prop:2111pm27apr22} \& \ref{prop:219pm27apr22}]
The proof follows  from the construction of $\MBP_{f_k}(i_0)$. We omit the details.
\end{proof}

\section{Rank-$k$: a special case}
\label{section-rankk-special}
In this section we consider a case that is similar to the case considered in Section \ref{sec:positive_dot_prduct} but our assumptions are different. Firstly, we assume that $A$ admits the decomposition
\begin{equation}\label{eqn:511pm05jun22}
    A=\mathbf{v}_1\mathbf{v}_1'+ \mathbf{v}_2\mathbf{v}_2'+\ldots+\mathbf{v}_k\mathbf{v}_k'.
\end{equation}
This is identical to the random dot-product case in \eqref{eqn:511pm15mar22}, aside from the fact that we no longer assume that the vectors $\mathbf{v}_i$ are non-negative.
Then we have $a_{ij}=\sum_{l=1}^k  v_{li}v_{lj}$, and we assume that $a_{ij}\geq 0$ for all $i,j\in[n]$. 
Contrary to the random dot-product setting in Section \ref{sec:positive_dot_prduct}, we show that we can relate the $n$-type branching process to a multi-type branching process with $2^{k-1}$ types. 
We make the following assumption:
\medskip
 
\paragraph{\bf Non-negative decomposition} We assume that \eqref{eqn:511pm05jun22} satisfies
\begin{equation}
    \label{eqn:541pm05jun22}
    v_{1i} \geq \sum_{l=1}^k |v_{li}| \qquad \text{for all} \qquad i\in[n].
\end{equation}
With a slight abuse of notations, let us define the types
\begin{equation}
    t_i(l):= v_{1i} \pm v_{2i} \pm \cdots \pm v_{li},
\end{equation}
for $i\in[n]$, and where $l$ runs over $2^{k-1}$ possible distinct configurations of $+$'s and $-$'s. The condition \eqref{eqn:541pm05jun22} ensures that $t_i(l)\geq 0$ for all $i$ and $l$. Now we are well equipped to define a $2^{k-1}$-type marked branching process $\MBP_{2^{k-1}}(i_0)$ as follows: 
\begin{itemize}
    \item The root is marked with $i_0$;
    \item A particle with mark $i$ gives birth to $\text{Poi}(\ell_n t_i(l)\sum_{j=1}^k t_i(l))$ many particles of type $t_l$ independently for $l\in[2^{k-1}]$;
    \item The $l$ type children of a particle independently receive a mark $j \in[n]$ with probability $\frac{t_j(l)}{\sum_{j=1}^n t_j(l)}$.
\end{itemize}

We again see that $\MBP_{2^{k-1}}(i_0)$ has the same law as $\BP(i_0)$:
 
\begin{thm}[Marked $2^{k-1}$-type BP is $n$-type BP]
\label{prop:804pm05jun22}
$\MBP_{2^{k-1}}(i_0)$ has the same distribution as $\BP(i_0)$.
\end{thm}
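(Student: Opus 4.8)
The plan is to run the same generation-by-generation induction that proves Theorem \ref{prop-n-vs-k-type-BP} and its Case~I analogue in Section \ref{section-rank2}, reducing the whole statement to a single algebraic identity about the types $t_i(l)$. Both processes give the root the mark $i_0$, so they agree in generation $0$. Assuming the two laws coincide up to generation $l$, I would condition on a particle of that generation carrying mark $i$ and compute the joint distribution of its mark-$j$ offspring over all $j\in[n]$; matching these completes the inductive step, since in the derived $n$-type process a mark-$j$ particle is by definition declared to be of type $j$.

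The only probabilistic tool needed is Poisson thinning. In $\MBP_{2^{k-1}}(i_0)$ a mark-$i$ particle produces $\mathrm{Poi}\big(\ell_n t_i(l)\sum_{m=1}^n t_m(l)\big)$ children of type $t_l$, and each such child independently gets mark $j$ with probability $t_j(l)/\sum_{m=1}^n t_m(l)$. Thinning a Poisson count by an independent labelling makes the number of type-$t_l$, mark-$j$ children equal to $\mathrm{Poi}(\ell_n t_i(l)t_j(l))$, independently over $l$ and over $j$. Adding the independent contributions of the $2^{k-1}$ types, the total number of mark-$j$ children is $\mathrm{Poi}\big(\ell_n\sum_{l=1}^{2^{k-1}} t_i(l)t_j(l)\big)$, independently over $j$. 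Comparing with $\BP(i_0)$, where a mark-$i$ particle has $\mathrm{Poi}(\ell_n a_{ij})$ children of mark $j$ independently over $j$, the induction closes exactly when
\begin{equation*}
\sum_{l=1}^{2^{k-1}} t_i(l)t_j(l) = a_{ij} \qquad \text{for all } i,j\in[n].
\end{equation*}

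The heart of the argument, and the step I expect to demand the most care, is this identity. Writing $t_i(l)=v_{1i}+\sum_{m=2}^k \varepsilon_m^{(l)} v_{mi}$, with $l$ ranging over the sign vectors $(\varepsilon_2^{(l)},\ldots,\varepsilon_k^{(l)})\in\{\pm1\}^{k-1}$, the product $t_i(l)t_j(l)$ splits into diagonal terms $v_{mi}v_{mj}$ (coefficient $(\varepsilon_m^{(l)})^2=1$), cross terms $\varepsilon_m^{(l)}\varepsilon_p^{(l)} v_{mi}v_{pj}$ with $m\neq p$, and terms linear in a single sign. Summing over all sign vectors and using $\sum_l \varepsilon_m^{(l)}=0$ and $\sum_l \varepsilon_m^{(l)}\varepsilon_p^{(l)} = 2^{k-1}\mathbf 1\{m=p\}$ annihilates every cross and linear term and leaves $\sum_l t_i(l)t_j(l)=2^{k-1}\sum_{m=1}^k v_{mi}v_{mj}=2^{k-1}a_{ij}$. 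To absorb the spurious factor $2^{k-1}$, the types must carry the normalisation $t_i(l)=2^{-(k-1)/2}\big(v_{1i}\pm\cdots\pm v_{ki}\big)$, precisely as the factors $1/\sqrt2$ enter Case~I of Section \ref{section-rank2}; this constant cancels out of the mark probabilities and merely rescales the Poisson intensities, so it does not otherwise affect the argument.

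Finally, assumption \eqref{eqn:541pm05jun22} is exactly what is used to guarantee $t_i(l)\ge 0$ for every $i$ and $l$, so that each intensity $\ell_n t_i(l)\sum_m t_m(l)$ is non-negative and the offspring law is well defined; where $\sum_m t_m(l)=0$ no type-$t_l$ children are born and the corresponding mark probabilities are vacuous. With the identity in hand, matching the mark-$j$ offspring counts for every $j$ matches the generation-$(l+1)$ laws, which closes the induction and proves that $\MBP_{2^{k-1}}(i_0)$ and $\BP(i_0)$ have the same distribution.
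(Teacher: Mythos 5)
Your proof is correct and follows essentially the same route as the paper: the same generation-by-generation induction, Poisson thinning to get $\mathrm{Poi}(\ell_n t_i(l)t_j(l))$ per type, and the alternating-sum identity $\sum_{l}t_i(l)t_j(l)=2^{k-1}a_{ij}$, which is exactly the paper's Lemma~\ref{lem:823pm05jun22} (stated there without proof, and which you actually verify via the sign-vector relations $\sum_l\varepsilon_m^{(l)}=0$ and $\sum_l\varepsilon_m^{(l)}\varepsilon_p^{(l)}=2^{k-1}\mathbf{1}\{m=p\}$). You are also right to flag the normalisation: the paper's displayed definition of $\MBP_{2^{k-1}}(i_0)$ omits the factor $\tfrac{1}{2^{k-1}}$ that its own proof silently inserts into the Poisson intensity, and your rescaling $t_i(l)\mapsto 2^{-(k-1)/2}t_i(l)$ repairs this in the same way the $1/\sqrt{2}$ factors do in Case~I of Section~\ref{section-rank2}.
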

\begin{proof}
Consider a particle in $\MBP_{2^{k-1}}(i_0)$, and suppose that it is of type $i$, that is, in $\BP(i_0)$ the particle had mark $i$. Then the number of children of this particle that are of type $t_l$, and receive the mark $j$, is $\text{Poi}\left(\tfrac{1}{2^{k-1}}\ell_n t_i(l)\sum_{i=1}^{2^{k-1}} t_i(l)\frac{t_j(l)}{\sum_{j=1}^n t_j(l)}\right)$ disatributed, that is a $\text{Poi}(\ell_n t_i(l) t_j(l))$ distribution independently for $l \in[k]$. Therefore, the distribution of the total number of children with mark $j$ is a $\text{Poi}\left(\tfrac{1}{2^{k-1}}\ell_n\sum_{l=1}^{2^{k-1}} t_i(l) t_j(l)\right)$. A simple calculation, that we state in Lemma \ref{lem:823pm05jun22}, shows that this is equal to a $\text{Poi}(\ell_na_{ij})$ distribution (since $\sum_{l=1}^{2^{k-1}} t_i(l) t_j(l) = \sum_{l=1}^k  v_{li}v_{lj} $). Now, since the distribution of the marks are also independent, the particle gives birth to $\text{Poi}(\ell_na_{ij})$ many particle of mark $j$ independently for $j \in [n]$. The rest of the proof follows from an induction argument similar to the proof of Theorem \ref{prop-n-vs-k-type-BP}.
\end{proof}
Let us now state the technical lemma that we have used in the proof of Theorem \ref{prop:804pm05jun22}. It can be proved using induction and we omit further details. 
\begin{lem}[Alternating sums]
\label{lem:823pm05jun22}
For any two vectors $\mathbf{a}, \mathbf{b} \in \mathbb{R}^k$, $k\geq 1$ let
\begin{align*}
    S_k(\mathbf{a}, \mathbf{b}) &:= (a_1+a_2+\cdots+a_k)(b_1+b_2+\cdots+b_k) + (a_1-a_2+\cdots+a_k)(b_1-b_2+\cdots+b_k) \\
    &+ \cdots + (a_1-a_2-\cdots-a_k)(b_1-b_2-\cdots-b_k), \nonumber
\end{align*}
where the sum is over all $\pm$ coefficients of $(a_2,b_2), \ldots, (a_k,b_k) $. Then
\begin{equation*}
    S_k(\mathbf{a}, \mathbf{b}) = 2^{k-1} (a_1 b_1 + \cdots + a_k b_k).
\end{equation*}
\end{lem}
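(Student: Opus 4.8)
The plan is to expand each product in the sum and exploit the orthogonality (cancellation) of the sign patterns, rather than to carry out an induction, though I will note that induction also works and matches the remark in the paper. First I would index the $2^{k-1}$ summands by sign vectors $\epsilon=(\epsilon_1,\ldots,\epsilon_k)\in\{\pm1\}^k$ with the normalization $\epsilon_1=+1$ fixed, so that the generic summand is $\big(\sum_{i=1}^k\epsilon_i a_i\big)\big(\sum_{j=1}^k\epsilon_j b_j\big)$. Expanding the product and interchanging the order of summation gives
\[
S_k(\mathbf{a},\mathbf{b})=\sum_{i=1}^k\sum_{j=1}^k a_i b_j\Big(\sum_{\epsilon:\,\epsilon_1=+1}\epsilon_i\epsilon_j\Big),
\]
so everything reduces to evaluating the inner coefficient $c_{ij}:=\sum_{\epsilon:\,\epsilon_1=+1}\epsilon_i\epsilon_j$ for each pair $(i,j)$.

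The key step is then the elementary observation that $c_{ij}=2^{k-1}\mathbf{1}\{i=j\}$. When $i=j$ we have $\epsilon_i\epsilon_j=\epsilon_i^2=1$, and there are exactly $2^{k-1}$ admissible sign vectors, giving $c_{ii}=2^{k-1}$. When $i\neq j$, the two indices cannot both equal $1$, so at least one of them, say $j$, differs from $1$ and hence $\epsilon_j$ is a free sign; summing $\epsilon_i\epsilon_j$ over the two values $\epsilon_j=\pm1$ (with the remaining signs held fixed) already produces a cancelling pair, whence $c_{ij}=0$. Substituting $c_{ij}=2^{k-1}\mathbf{1}\{i=j\}$ back into the double sum collapses it to $2^{k-1}\sum_{i=1}^k a_i b_i$, which is the claim.

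An equivalent route, matching the paper's suggestion, is induction on $k$, with the trivial base case $S_1(\mathbf{a},\mathbf{b})=a_1b_1$. For the inductive step I would write the $k$-fold signed factors as $(P\pm a_k)$ and $(Q\pm b_k)$, where $P$ and $Q$ denote the corresponding $(k-1)$-fold signed sums, and pair the two summands with $\epsilon_k=+1$ and $\epsilon_k=-1$:
\[
(P+a_k)(Q+b_k)+(P-a_k)(Q-b_k)=2PQ+2a_kb_k.
\]
Summing over the $2^{k-2}$ remaining sign patterns on $(a_2,\ldots,a_{k-1})$ yields the recursion $S_k=2S_{k-1}+2^{k-1}a_kb_k$, and the inductive hypothesis $S_{k-1}=2^{k-2}\sum_{i=1}^{k-1}a_ib_i$ immediately closes the argument.

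I do not foresee a serious obstacle, as this is essentially a bookkeeping identity; the only point requiring minor care is the normalization convention $\epsilon_1=+1$. The sum ranges over $2^{k-1}$ rather than $2^k$ sign patterns precisely because flipping all signs leaves each product invariant, so the full $2^k$ patterns pair up into $2^{k-1}$ identical products, and fixing $\epsilon_1=+1$ selects one representative from each pair. One must keep this counting consistent, which is exactly what makes the diagonal coefficient equal to $2^{k-1}$. In the direct approach the off-diagonal cancellation is robust to this convention, since whenever $i\neq j$ at least one free sign remains even after $\epsilon_1$ is fixed.
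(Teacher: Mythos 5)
Your proposal is correct, and in fact goes beyond the paper, which only remarks that the lemma ``can be proved using induction'' and omits all details. Your second argument is precisely the induction the authors had in mind: the pairing $(P+a_k)(Q+b_k)+(P-a_k)(Q-b_k)=2PQ+2a_kb_k$ gives the recursion $S_k=2S_{k-1}+2^{k-1}a_kb_k$, which closes the induction correctly (including the base case $k=1$ and the edge case $k=2$, where only $2^{0}=1$ residual sign pattern remains). Your first argument is a genuinely different and arguably cleaner route: expanding each product and computing the coefficient $c_{ij}=\sum_{\epsilon:\,\epsilon_1=+1}\epsilon_i\epsilon_j=2^{k-1}\mathbf{1}\{i=j\}$ is the standard orthogonality-of-characters computation for $\{\pm1\}^{k-1}$, and it makes transparent \emph{why} the cross terms vanish --- whenever $i\neq j$ at least one free sign survives the normalization $\epsilon_1=+1$, producing a cancelling pair. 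Your remark on the normalization is also the right point to flag: the lemma's sum runs over $2^{k-1}$ rather than $2^k$ patterns because a global sign flip leaves each product invariant, and fixing $\epsilon_1=+1$ picks one representative per orbit, which is exactly what makes the diagonal coefficient $2^{k-1}$. Either of your two arguments would serve as a complete proof of the lemma; no gaps.
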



\section{Random graphs as thinned branching processes: Proof of Proposition \ref{prop:431pm15mar22}}
\label{sec:1126am06may2022}

Note that $\widetilde{\BP}(i_0)$ has type-$i$ root, and $\mathcal{N}_0(i_0)=i_0$. Therefore, the proposition is true for $k=0$. Now suppose that the proposition is true up to generation $k$. Pick a particle from generation $k$ in $\widetilde{\BP}(i_0)$, and suppose the particle is of type $i$. Then it has at least one child of type $j$ with probability $1-\exp{\left(-\ell_n a_{ij}\right)}$ independently for all $j\in [n]$. If $j$-type children already appeared in $\widetilde{\BP}(i_0)$ within generation $k$ then we delete all the $j$ type children from generation $k+1$. If $j$-type children have not appeared in $\widetilde{\BP}(i_0)$ within generation $k$, and we get multiple $j$-type children then we just keep one of them. On the other hand, in the graph we consider the vertex $i$ in $\mathcal{N}_k(i_0)$. Then it has a neighbor $j$ in $\mathcal{N}_{k+1}(i_0)$ with probability $1-\exp{\left(-\ell_n a_{ij}\right)}$ independently for each $j$ only if the vertex $j$ has not appeared in $\cup_{\ell=0}^k\mathcal{N}_{\ell}(i_0)$. This proves that the proposition is true up to generation $k+1$. Therefore the proposition is proved using induction.
\qed

\section{Conclusion, discussion and open problems}
\label{section-conclusion}
In this section, we discuss our results and state some further open problems.
\medskip

\paragraph{\bf Linking two fundamental notions}
The rank of a matrix is a fundamental property of that matrix. In turn, the branching process limit of a random graph is a fundamental property of the random graph. Our paper nicely links these two fundamental notions, in a quantitative manner. One can expect that such a relation has deep consequences, and below we discuss a few of them. 
\medskip 

\paragraph{\bf Local limits and giants of dot-product random graphs}
Bollob\'as, Janson and Riordan \cite{bollobas2007phase} prove general convergence results of local neighborhoods in inhomogeneous random graphs to branching processes, and, as a result of this, identify when there is a giant (a component of size order of $n$), and if so, how large it is. Consider the model in \eqref{eqn:511pm15mar22} for $k=2$. Then the largest eigenvalue of $A$ is 
\begin{equation}
\label{eqn:312pm15jul12}
    \lambda_1(A):=\tfrac{1}{2}(\sum_{i=1}^n v_{1i}^2+ \sum_{i=1}^n v_{2i}^2)+\tfrac{1}{2}\sqrt{(\sum_{i=1}^n v_{1i}^2 - \sum_{i=1}^n v_{2i}^2)^2 +2(\sum_{i=1}^n v_{1i}v_{2i})^2 }.
    \end{equation}
When \eqref{eqn:257pm15jul22} is satisfied, the results in \cite{bollobas2007phase, Jans08a}, give that if $\ell_n>1/\lambda_1(A)$, there is a giant in $G_n(A)$ with high probability, assuming that the empirical distributions of $(v_{1i}, v_{2i})_{i\in[n]}$ are sufficiently regular. Our result describes the local neighborhoods of $G_n(A)$ in terms of a $2$-type marked branching process (as opposed to a branching process with arbitrarily many types in \cite{bollobas2007phase}).    
\medskip
\paragraph{\bf Coupling beyond the sparse regime}
The above shows that our results have important consequences in the sparse setting. However, the beauty of the result is that the connection to finite-type branching processes holds {\em irrespective} of the edge density. One can imagine that stochastic domination by an explicit finite-type branching process is useful far beyond the sparse setting. For example, for the {\em connectivity} of random graphs, the typical threshold occurs in general inhomogeneous random graphs when the average degree is of order $\log{n}$, see \cite{DevFra14} and the references therein.
\medskip
\paragraph{\bf Critical behavior of rank-$k$ random graphs}
The general structure in Bollob\'as, Janson and Riordan \cite{bollobas2007phase} allows one to identify exactly when there is a giant or not, as well as several other properties of the random graphs involved. However, this set-up is likely not suitable to decide what the precise critical behavior is. Already in the rank-1 case, there are several universality classes (see \cite[Chapter 4]{Hofs23} and the reference therein for an extensive overview), and one can expect that this only becomes more tricky when the rank is higher. For example, in the dot-product setting, is the heaviest weight deciding the universality class, or is there some collaboration between the various weights? Recall \eqref{eqn:312pm15jul12}, and note that when $v_{1i}$'s and $v_{1i}$' are i.i.d. samples from random variables $V_1$ and $V_2$ with finite second moment, then $\lambda_1(A)/n$ converges to $\tfrac{1}{2}(\mathbb{E}(V_1^2)+ \mathbb{E}(V_2^2))+\tfrac{1}{2}\sqrt{(\mathbb{E}(V_1^2) - \mathbb{E}(V_2^2))^2 +2(\mathbb{E}(V_1 V_2))^2 }$. In the light of \cite[Chapter 4]{Hofs23}, it appears that the moments of $V_1$ and $V_2$ will play a role to describe the critical behavior of $G_n(A)$.
See \cite{Mier08} for a nice description how critical multi-type branching processes can be explored in terms of 1-type branching processes. Such an approach has the potential to be extended to the random graph setting when the description is in terms of multi-type branching processes with finitely many types independent of $n$. This may give a starting point for the analysis of the critical behavior of rank-$k$ random graphs.
\medskip
\paragraph{\bf Ranks of random graphs and branching processes: optimality}
Obviously, our results are not sharp for rank-$k$ random graphs when $k\geq 3$. It is unclear to us whether the rank really determines how many types the multi-type branching process description needs, and we see several different cases: For the random dot-product setting, $k$ suffices, for the Kotlov-Lov\'asz setting, $f_k=O(2^{k/2})$ suffices, while in the setting in \eqref{eqn:511pm05jun22}, we need $2^{k-1}$ types. It will be interesting to see in more detail what the minimal number is, and how this depends on the precise structure of $A$.

\medskip
\paragraph{\bf Acknowledgement.} The work of SC and RvdH is supported in part by the Netherlands Organisation for Scientific Research (NWO) through the Gravitation {\sc Networks} grant 024.002.003.

\bibliographystyle{plain}
\bibliography{referencesrankk.bib}

\begin{thebibliography}{10}

\bibitem{alon1995note}
Noga Alon.
\newblock A note on network reliability.
\newblock In {\em Discrete probability and algorithms ({M}inneapolis, {MN},
  1993)}, volume~72 of {\em IMA Vol. Math. Appl.}, pages 11--14. Springer, New
  York, 1995.

\bibitem{alon1989counterexample}
Noga Alon and Paul~D Seymour.
\newblock A counterexample to the rank-coloring conjecture.
\newblock {\em Journal of Graph Theory}, 13(4):523--525, 1989.

\bibitem{MR3827114}
Avanti Athreya, Donniell~E. Fishkind, Minh Tang, Carey~E. Priebe, Youngser
  Park, Joshua~T. Vogelstein, Keith Levin, Vince Lyzinski, Yichen Qin, and
  Daniel~L. Sussman.
\newblock Statistical inference on random dot product graphs: a survey.
\newblock {\em J. Mach. Learn. Res.}, 18:Paper No. 226, 92, 2017.

\bibitem{bollobas1998random}
B{\'e}la Bollob{\'a}s.
\newblock Random graphs.
\newblock In {\em Modern graph theory}, pages 215--252. Springer, 1998.

\bibitem{bollobas2007phase}
B\'{e}la Bollob\'{a}s, Svante Janson, and Oliver Riordan.
\newblock The phase transition in inhomogeneous random graphs.
\newblock {\em Random Structures Algorithms}, 31(1):3--122, 2007.

\bibitem{DevFra14}
Luc Devroye and Nicolas Fraiman.
\newblock Connectivity of inhomogeneous random graphs.
\newblock {\em Random Structures Algorithms}, {\bf 45}(3):408--420, (2014).

\bibitem{erdHos1960evolution}
Paul Erd{\H{o}}s, Alfr{\'e}d R{\'e}nyi, et~al.
\newblock On the evolution of random graphs.
\newblock {\em Publ. Math. Inst. Hung. Acad. Sci}, 5(1):17--60, 1960.

\bibitem{gilbert1959random}
E.~N. Gilbert.
\newblock Random graphs.
\newblock {\em Ann. Math. Statist.}, 30:1141--1144, 1959.

\bibitem{Hofs23}
R.~van~der Hofstad.
\newblock {\em Stochastic processes on random graphs}.
\newblock 2023+.
\newblock In preparation, see\\{\tt
  http://www.win.tue.nl/$\sim$rhofstad/SaintFlour\_SPoRG.pdf}.

\bibitem{Jans08a}
S.~Janson.
\newblock Asymptotic equivalence and contiguity of some random graphs.
\newblock {\em Random Structures Algorithms}, {\bf 36}(1):26--45, 2010.

\bibitem{kotlov1996rank}
Andrew Kotlov and L\'{a}szl\'{o} Lov\'{a}sz.
\newblock The rank and size of graphs.
\newblock {\em J. Graph Theory}, 23(2):185--189, 1996.

\bibitem{Mier08}
G.~Miermont.
\newblock Invariance principles for spatial multitype {G}alton-{W}atson trees.
\newblock {\em Ann. Inst. Henri Poincar\'e Probab. Stat.}, {\bf
  44}(6):1128--1161, (2008).

\bibitem{nisan1995rank}
Noam Nisan and Avi Wigderson.
\newblock On rank vs. communication complexity.
\newblock {\em Combinatorica}, 15(4):557--565, 1995.

\bibitem{MR2213964}
Ilkka Norros and Hannu Reittu.
\newblock On a conditionally {P}oissonian graph process.
\newblock {\em Adv. in Appl. Probab.}, 38(1):59--75, 2006.

\bibitem{van1976bound}
Cyriel van Nuffelen.
\newblock A bound for the chromatic number of a graph.
\newblock {\em The American Mathematical Monthly}, 83(4):265--266, 1976.

\end{thebibliography}
\end{document}